\numberwithin{equation}{section}
\def\cf{{\mathcal F}}
\def\cg{{\mathcal G}}
\def\ch{{\mathcal H}}
\def\cam{{\mathcal M}}
\def\cs{{\mathcal S}}
\def\ga{{\mathfrak A}}
\def\gam{{\mathfrak M}}
\def\bc{{\mathbb C}}
\def\bn{{\mathbb N}}
\def\br{{\mathbb R}}
\def\bz{{\mathbb Z}}
\def\a{\alpha}
\def\b{\beta}
\def\g{\gamma}  
\def\d{\delta}
\def\l{\lambda} 
\def\m{\mu}
\def\r{\rho}
\def\s{\sigma} 
\def\f{\varphi}  
\def\om{\omega} \def\Om{\Omega}
\newtheorem{theorem}{Theorem}[section]
\newtheorem{lemma}[theorem]{Lemma}
\newtheorem{proposition}[theorem]{Proposition}
\def\sp{\mathop{\rm sp}}
\def\supp{\mathop{\rm supp}}
\def\di{{\rm d}}
\begin{document}

\title[]
{Vacuum distribution, norm and spectral properties for sums of monotone position operators}
\author{Vitonofrio Crismale}
\address{Vitonofrio Crismale\\
Dipartimento di Matematica\\
Universit\`{a} degli studi di Bari\\
Via E. Orabona, 4, 70125 Bari, Italy}
\email{\texttt{vitonofrio.crismale@uniba.it}}
\author{Yun Gang Lu}
\address{Yun Gang Lu\\
Dipartimento di Matematica\\
Universit\`{a} degli studi di Bari\\
Via E. Orabona, 4, 70125 Bari, Italy}
\email{\texttt{yungang.lu@uniba.it}}
\date{\today}

\begin{abstract} We investigate the spectrum for partial sums of $m$ position (or gaussian) operators on monotone Fock space based on $\ell^2(\mathbb{N})$. In the basic case of the first consecutive operators, we prove it coincides with the support of the vacuum distribution. Thus, the right endpoint of the support gives their norm. In the general case, we get the last property for norm still holds. As the single position operator has the vacuum symmetric Bernoulli law, and the whole of them is a monotone independent family of random variables, the vacuum distribution for partial sums of $n$ operators can be seen as the monotone binomial with $n$ trials. It is a discrete measure supported on a finite set, and we exhibit recurrence formulas to compute its atoms and probability function as well. Moreover, lower and upper bounds for the right endpoints of the supports are given.

\vskip0.1cm\noindent
 \\

\bigskip

\noindent {\bf Mathematics Subject Classification}: 46L53, 47A10, 60B99 \\
{\bf Key words}: non commutative probability; position operators; Gelfand spectrum; moment generating functions.
\end{abstract}

\maketitle

\section{introduction}

Position operators on Fock spaces are the self-adjoint part of creators or annihilators with the same test function. In non commutative probability they are also called generalised gaussian operators, and in the monotone case \cite{DeGLu,Lu,Mur2} are the most natural examples of monotone independent random variables \cite{Mur}. As a consequence, their partial sums, up to usual rescaling, weakly converge in the vacuum state to the standard (i.e. centered with unit variance) arcsine law, namely the probability distribution with density $\nu(\di x)=\frac{1}{\pi\sqrt{2-x^2}}\di x$ on $(-\sqrt{2},\sqrt{2})$.

Many results have been obtained in the last years in the monotone kingdom, such as monotone convolution and monotone central limit theorems \cite{Mu, Mur, CFL2}, monotone cumulants and monotone infinite divisibility \cite{HasSa, Has}, and the list above is far to be complete. Monotone Fock spaces, as prominent examples of interacting Fock spaces, were first investigated in \cite{Lu}, whereas in \cite{Boz} the author highlighted the relations between monotone creation and annihilation operators and Pusz-Woronowicz twisted operators \cite{PW}. More recently, the study of distributional symmetries on monotone stochastic processes built on the concrete $C^*$-algebra of creation and annihilation operators on monotone Fock space was started in \cite{CFL, CFG}. The basic idea of monotone Fock spaces is a suitable deformation of the usual $n$th scalar product on the $n$th particle space of the full Fock space. Namely, the new scalar product is induced by the orthogonal projection onto the linear space spanned by some increasingly ordered (w.r.t. a linear order on the index set) elements of the canonical basis of the full Fock space. As a special case of the so-called Yang-Baxter-Hecke quantisation \cite{Boz}, monotone creation and annihilation operators sometimes exhibit common features with the $q$-deformed case, with $-1<q<1$ (see, e.g. \cite{BKS}). As an example, the reader is referred to \cite{CrLu}. The situation radically changes for monotone stochastic processes invariant under some distributional symmetries, which behave in a completely different way \cite{CFL,CFG}. Furthermore, in the $q$-deformed case, the vacuum vector is separating for the von Neumann algebra generated by all of the gaussian operators, whereas in the monotone case it was proved in \cite{CFL} that the commutant for the same algebra is trivial. As a consequence, even for a single position operator, one cannot directly deduce that the support of the moments distribution in the vacuum state covers the whole spectrum, the latter condition being equivalent to the faithfulness of the vector state. Up to our knowledge, the spectral properties for sums of monotone position operators have not yet been investigated. Here we present a path to achieve information on the spectrum.

Namely, after denoting $s_i:=a(e_i)+a^\dag(e_i)$, $i\in\bn$ the gaussian operator on the monotone Fock space built on $\ell^2(\mathbb{N})$, we prove that the Radon measure induced by the vacuum vector on the spectrum of the unital commutative $C^*$-algebra generated by $S_m:=\sum_{i=1}^m s_i$, is \emph{basic} \cite{Dix} for any $m$. This property in particular entails the above measure is supported on the whole Gelfand spectrum. As the latter results to be homeomorphic to the spectrum $\s(S_m)$ of $S_m$, it turns out $\s(S_m)$ is covered by the support of the vacuum law. Consequently, one figures out that the norm of $S_m$ is exactly the right endpoint of the support. Since arbitrary sums of $m$ position operators are identically distributed in the vacuum state, one naturally wonders if even they share their norm with $S_m$. Although it is not immediate, we give an affirmative answer.

The above arguments therefore lead us to investigate firstly the vacuum distribution of $S_m$. Recall that any $s_i$ is endowed with the symmetric Bernoulli law in the vacuum and, as previously mentioned, the collection of such operators is a family of monotone independent random variables. This suggests that the measure of any partial sum can be viewed as the \emph{monotone binomial} distribution. Here, using the monotone convolution, we highlight that any law is a symmetric measure supported on a finite subset of the reals, and give recurrence formulas for computing weights and atoms.

The paper is organised as follows. After some preliminary results comprised in Section \ref{sec1bis}, monotone binomial laws represent the main argument of Section \ref{sec2}. Using some results of \cite{Has}, in Proposition \ref{law1} we show recurrence formulas for atoms and probability functions, and the section ends with an estimation of the right endpoints, say $r_n$, of the supports. Although affected by a small error, this directly gives the size of the support, avoiding the longer recurrence formula, and as a biproduct, it allows us to achieve the sequence $\big(\frac{r_n}{\sqrt{n}}\big)_n$ converges (increasingly from the left) to $\sqrt{2}$, a consistent result with the monotone Central Limit Theorem \cite{Mur}. Finally, in Section \ref{sec4} we state the main theorem, i.e. the Radon measure defined by the vacuum vector is basic on the spectrum of the unital $C^*$-algebra $\cs_m$ generated by any $S_m$. As previously noticed, this entails that the spectrum of $S_m$ and the support of the vacuum distribution coincide, and in particular provides the value of the norm. The proof is obtained after showing the vacuum is a cyclic vector for the commutant of $\cs_m$ for any $m$. This crucial property is not generally verified when one handles with a general sum involving $m$ monotone position operators, as shown in the paper. Nevertheless, a suitable operator direct sum decomposition gives the norm in this case, which turns out to be equal to that of $S_m$, as one naturally foresees. The paper ends with an appendix where we briefly show how the method of moments generating function induces a nice relation among the atoms of the vacuum law of $S_m$ and those of the distributions of $S_1,\ldots, S_{m-1}$. The result, presented in Proposition \ref{lem2}, refines an existing one in \cite{Has} based  on monotone convolution, and is added here for the convenience of the reader.

\section{preliminaries}
\label{sec1bis}
In this section we mainly recall some definitions and features which will used throughout the paper.

\subsection{basic measures on Gelfand spectrum}

Let $\ch$ be a separable Hilbert space with inner product $\langle \cdot,\cdot \rangle$, and $\ga$ an abelian $C^*$-algebra of operators on $\ch$. Recall the spectrum $\sp(\ga)$ of $\ga$ is the $*$-weakly locally compact (compact if $\ga$ is unital) space of characters. If $C_0(\sp(\ga))$ is the collection of continuous complex-valued functions on $\sp(\ga)$ vanishing at infinity and $f\in C_0(\sp(\ga))$, we get $T_f$ as the unique element in $\ga$ realising the \emph{Gelfand isomorphism}, \emph{i.e.} the normed $*$-algebra isomorphism between $C_0(\sp(\ga))$ and $\ga$ such that $\f(T_f)=f(\f)$ (see, \emph{e.g.} \cite{Dix} for details). For any $f\in C_0(\sp(\ga))$ and $x,y\in \ch$, we denote by $\nu_{x,y}$ the spectral measure such that $\nu_{x,y}(f):=\langle T_f x, y\rangle$. Notice that when $\ga$ is unital, one replaces in the above lines $C_0(\sp(\ga))$ with the algebra $C(\sp(\ga))$ of continuous functions on $\sp(\ga)$.

\noindent A (Radon) positive measure $\m$ on $\sp(\ga)$ is called \emph{basic} \cite{Dix} if for any subset in $\sp(\ga)$ to be locally $\m$-negligible, it is necessary and sufficient to be locally $\nu_{x,x}$-negligible for all $x\in \ch$.

\noindent If $\m$ is a basic measure, any other basic measure on $\sp(\ga)$ is measure equivalent to $\m$ (\emph{i.e.} they are mutually absolutely continuous). Moreover, since the union of the supports of the $\nu_{x,x}$ is dense in $\sp(\ga)$ and any $\nu_{x,x}$ is absolutely continuous w.r.t $\m$, in particular one has that $\m$ is supported on the whole spectrum of $\ga$.

\subsection{monotone independence}
Let $\m$ be a probability measure defined on the Borel $\s$-field over $\br$. The moment sequence associated with $\mu$ is denoted by $(m_n(\mu))_{n\geq1}$. Recall that for each $z\in\bc$
$$
\cam_\m(z):=\sum_{n=0}^{\infty}z^nm_n(\mu)
$$
is called moment generating function, which is considered as a formal power series if the series is not absolutely convergent.

From now on $\mathbb{C}^+$ and $\mathbb{C}^-$ will be the the upper and lower complex half-planes, respectively.  The Cauchy transform of $\m$ is defined as
\begin{equation*}
\cg_{\mu}(z):=\int_{-\infty}^{+\infty}\frac{\mu(dx)}{z-x},
\end{equation*}
i.e.
$$
\cg_{\mu}(z)=\frac{1}{z}\cam_\m\bigg(\frac{1}{z}\bigg).
$$
The map
$$
H_{\mu}(z):=\frac{1}{\cg_{\mu}(z)}
$$
is called the reciprocal Cauchy transform of $\m$.
$\cg_{\mu}(z)$ is analytic in $\bc\setminus \supp({\mu})$, and since $\cg_{\mu}(\overline{z})=\overline{\cg_{\mu}(z)}$, we can restrict its domain on $\mathbb{C}^+\cup \mathbb{R}$, where it uniquely determines $\m$.

The reciprocal Cauchy transform $H_{\mu}(z)$ plays an important role when one has to compute the distribution of a sum of monotone independent random variables \cite{Mur}, as we will see below.

Recall that an algebraic probability space is a pair $(\ga,\f)$, where $\ga$ is a unital $*$-algebra and $\f$ a state on $\ga$, i.e. a linear functional defined on $\ga$ such that $\f(a^*a)\geq 0$ for any $a\in\ga$, and $\f(1_{\ga})=1$. In this case any $a\in \ga$ is called a random variable. Consider a linearly ordered family $(\ga_i)_{i\in I}$ of $*$-subalgebras of $\ga$, where the index set $I$ is linearly ordered by the relation $<$. The family $(\ga_i)_{i\in I}$ is said to be \emph{monotone independent} if
$$
\f(a_1\cdots a_i \cdots a_n)=\f(a_i)\f(a_1\cdots a_{i-1}a_{i+1}\cdots a_n),
$$
when $a_{i-1}<a_i$ and $a_{i+1}<a_i$, with the elimination of one of the inequalities when $i=1$ or $i=n$. A family of random variables
is said to be monotonically independent if the family of subalgebras generated by each random variable is monotone independent. We first recall the following
\begin{theorem}[\cite{Mu}, Theorem 3.1]
\label{conv}
Let $a_{1},a_{2},\ldots,a_{n}\in \ga$ be monotonically independent self-adjoint
random variables, in the natural order, over a $*$-algebraic probability space $(\ga,\f)$. If $\m_{a_i}$ is the probability distribution of ${a_i}$  under the state $\f$, then
\begin{equation}
\label{htran}
H_{\m_{a_1+a_2+\ldots + a_n}}(z)=H_{\m_{a_1}}(H_{\m_{a_2}}(\cdots H_{\m_{a_n}}(z)\cdots)).
\end{equation}
\end{theorem}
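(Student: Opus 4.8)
The plan is to reduce the statement to the two-variable case and prove that case by a direct moment computation resting on the peak-elimination rule for monotone independence. For the reduction I would induct on $n$, splitting off the lowest variable: set $X:=a_1$ and $Y:=a_2+\cdots+a_n$. Since $Y$ and all its powers lie in the subalgebra generated by $a_2,\ldots,a_n$, whose indices exceed that of $a_1$, the defining relation makes the pair $\big(\langle a_1\rangle,\langle a_2,\ldots,a_n\rangle\big)$ monotone independent with $a_1$ in the lower slot; this grouping of a consecutive block into a single subalgebra is the only structural fact needed beyond the $n=2$ identity, and it follows from the peak rule by iterated elimination (a peak sitting in $\langle a_2,\ldots,a_n\rangle$ and flanked by $a_1$'s can be reduced from the inside out). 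Granting $H_{\mu_{X+Y}}=H_{\mu_X}\circ H_{\mu_Y}$ for two variables, together with the induction hypothesis $H_{\mu_Y}=H_{\mu_{a_2}}\circ\cdots\circ H_{\mu_{a_n}}$, composition yields \eqref{htran}.

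For $n=2$ write $X=a_1$, $Y=a_2$ with $X$ of smaller index. Because $H_\mu=1/\cg_\mu$, the target $H_{\mu_{X+Y}}=H_{\mu_X}\circ H_{\mu_Y}$ is equivalent to the subordination identity $\cg_{\mu_{X+Y}}(z)=\cg_{\mu_X}\big(H_{\mu_Y}(z)\big)$, which is what I would establish. Expanding $\cg_{\mu_{X+Y}}(z)=\sum_{n\ge0}z^{-(n+1)}\f\big((X+Y)^n\big)$, each moment is a sum over words in the letters $X,Y$. Writing a word through its maximal blocks $X^{n_0}Y^{m_1}X^{n_1}\cdots Y^{m_k}X^{n_k}$ and eliminating every $Y$-block in turn — each one is a peak, being flanked by $X$'s or placed at an end — the peak rule collapses the word to $\f\big(X^{n_0+\cdots+n_k}\big)\prod_{j}\f\big(Y^{m_j}\big)$.

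To assemble the generating function I would substitute $\f(X^{p})=\int x^{p}\,\mu_X(dx)$ and sum the resulting geometric series over the $X$-slots and over the $Y$-blocks. The delicate point — and the step I expect to cause the most trouble — is that the blocks are \emph{maximal}: an interior $X$-slot lying between two consecutive $Y$-blocks must hold at least one $X$, while the two boundary slots may be empty. Accordingly the interior slots contribute $\sum_{n\ge1}(x/z)^n$ and the boundary slots $\sum_{n\ge0}(x/z)^n$; keeping this distinction is precisely what converts the double sum into a \emph{composition} rather than a product (dropping it merges adjacent $Y$-blocks, double counts words, and produces a wrong formula). Writing $B(z):=z\cg_{\mu_Y}(z)-1=\sum_{m\ge1}\f(Y^m)z^{-m}$, the summation gives the integrand $\tfrac1{z-x}+\tfrac{zB(z)}{(z-x)\left((z-x)-xB(z)\right)}$, which simplifies to $\cg_{\mu_Y}(z)/\big(1-x\,\cg_{\mu_Y}(z)\big)$; hence
\begin{equation*}
\cg_{\mu_{X+Y}}(z)=\int\frac{\cg_{\mu_Y}(z)}{1-x\,\cg_{\mu_Y}(z)}\,\mu_X(dx)=\int\frac{\mu_X(dx)}{H_{\mu_Y}(z)-x}=\cg_{\mu_X}\big(H_{\mu_Y}(z)\big),
\end{equation*}
and taking reciprocals gives the two-variable identity.

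Throughout, these manipulations are most cleanly carried out as identities of formal power series in $z^{-1}$; since for the compactly supported (indeed finitely supported) laws at hand all transforms are analytic near infinity and $H_{\mu_Y}$ maps $\bc^+$ into itself, the formal identity upgrades to an analytic one on a neighbourhood of infinity and the composition is well defined. The genuine obstacles are thus the correct bookkeeping of maximal blocks in the $n=2$ step and the verification of the grouping property underpinning the induction; once both are secured, the algebra collapses to the subordination identity with no further difficulty.
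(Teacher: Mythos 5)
You should know at the outset that the paper contains \emph{no} proof of Theorem \ref{conv}: it is quoted from Muraki's preprint \cite{Mu} (Theorem 3.1 there) and used as a black box, so there is no internal argument to compare yours against; your proof has to stand on its own, and it does. It is essentially the standard moment-expansion proof of Muraki's theorem. The induction splitting off the lowest-index variable is the right reduction, and the grouping fact it rests on --- that the ordered pair $\big(\langle a_1\rangle,\langle a_2,\ldots,a_n\rangle\big)$ is monotone independent --- is true and is proved exactly by the inside-out elimination you sketch: in any mixed word pick a letter of locally maximal index, remove it by the peak rule, merge the newly adjacent letters coming from the same algebra, and induct on word length. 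Since this is the one place where the asymmetry of monotone independence could bite, it deserves to be isolated as a lemma in a final write-up, but there is no gap.

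Your two-variable computation also checks out. The maximal-block bookkeeping (interior $X$-slots nonempty, boundary slots possibly empty) is handled correctly, and the resummation does collapse as claimed: with $B(z)=z\cg_{\m_Y}(z)-1$ one has $1+B(z)=z\cg_{\m_Y}(z)$ and $(z-x)-xB(z)=z\big(1-x\cg_{\m_Y}(z)\big)$, so your integrand $\frac{1}{z-x}+\frac{zB(z)}{(z-x)\left((z-x)-xB(z)\right)}$ equals $\frac{\cg_{\m_Y}(z)}{1-x\cg_{\m_Y}(z)}=\frac{1}{H_{\m_Y}(z)-x}$, giving $\cg_{\m_{X+Y}}(z)=\cg_{\m_X}\big(H_{\m_Y}(z)\big)$ and hence the two-variable identity. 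Note that the composition order you obtain, with the lowest variable outermost, is the one stated in \eqref{htran} and the one the paper actually uses (e.g. $\m_n=\m_1\rhd\m_{2,n}$ in the proof of Proposition \ref{law1}), so your orientation of the peak rule is correct. One caveat worth a sentence: in the generality of an arbitrary $*$-algebraic probability space the distributions $\m_{a_i}$ need not be determined by moments, so either \eqref{htran} should be read as an identity of formal power series in $1/z$ or a determinacy assumption should be added; for the paper's application the variables are bounded position operators with finitely supported laws, and your analytic upgrade near infinity followed by continuation is exactly what is needed.
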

\noindent Moreover, Theorem 3.5 in \cite{Mu} ensures that for any pair of probability measures $\m,\nu$ on $\mathbb{R}$, there exists a unique
distribution $\r$ on $\mathbb{R}$ such that
$$
H_{\r}(z) = H_\m(H_\nu(z)).
$$
$\r$ is called the monotonic convolution of $\m$ and $\nu$, and denoted by $\m\rhd \nu$. Monotone convolution is associative and affine in the first argument, and Theorem \ref{conv} entails the law for any partial sum of monotone independent random variables is the monotone convolution of the marginal distributions.

\section{the vacuum law for sums of position operators}
\label{sec2}
The section is devoted to present the vacuum distribution for partial sums of position operators in discrete monotone Fock space. It is well known that position operators are a family of monotone independent random variables \cite{Mur}. As a consequence, it appears quite natural to perform our investigation using the monotone convolution \cite{Mu}. Furthermore, since any gaussian operator is symmetrically Bernoulli distributed, Theorem 3.1 and Corollary 3.3 in \cite{Has} give that the vacuum distribution for the sum of $m$ position operators is a discrete measure with exactly $2^m$ atoms, and a formula for computing the weights. The main result of the section is Proposition \ref{law1}, where we collect the above results, and give a recurrence formula for computing the atoms of the law.

We first recall some useful features on discrete monotone Fock space, the reader being referred to \cite{CFL,CFL2, Lu, Mur} for further details.

For $k\geq 1$, denote $I_k:=\{(i_1,i_2,\ldots,i_k) \mid i_1< i_2 < \cdots <i_k, i_j\in \mathbb{N}\}$. The discrete monotone Fock space is the Hilbert space $\cf_m:=\bigoplus_{k=0}^{\infty} \ch_k$, where for any $k\geq 1$, $\ch_k:=\ell^2(I_k)$, and $\ch_0=\mathbb{C}\Om$, $\Om$ being the Fock vacuum. Borrowing the terminology from the physical language, we call each
$\ch_k$ the $k$th-particle space and denote by $\cf^o_m$ the dense linear manifold of finite particle vectors in $\cf_m$, that is
$$
\cf^o_m:=\bigg\{\sum_{n=0}^{\infty} c_n\xi_n \mid \xi_n\in \ch_n,\,\, c_n\in \mathbb{C}\,\,\, \text{s.t.}\,\, c_n=0\,\,\, \text{but a finite set}  \bigg\} \,.
$$
Let $(i_1,i_2,\ldots,i_k)$ be an increasing sequence of natural integers. The generic element of the canonical basis of $\cf_m$ is denoted by $e_{(i_1,i_2,\ldots,i_k)}$. Very often, we write $e_{(i)}$ as $e_i$ to simplify the notations.
The monotone creation and annihilation operators are respectively given, for any $i\in \mathbb{N}$, by $a^\dag_i\Om=e_i$, $a_i\Om=0$ and
\begin{equation*}
a^\dagger_i e_{(i_1,i_2,\ldots,i_k)}:=\left\{
\begin{array}{ll}
e_{(i,i_1,i_2,\ldots,i_k)} & \text{if}\, i< i_1 \\
0 & \text{otherwise}, \\
\end{array}
\right.
\end{equation*}
\begin{equation*}
a_ie_{(i_1,i_2,\ldots,i_k)}:=\left\{
\begin{array}{ll}
e_{(i_2,\ldots,i_k)} & \text{if}\, k\geq 1\,\,\,\,\,\, \text{and}\,\,\,\,\,\, i=i_1\\
0 & \text{otherwise}. \\
\end{array}
\right.
\end{equation*}
One can check that both $a^\dagger_i$ and $a_i$ have unital norm (see \cite{Boz}, Proposition 8), they are mutually adjoint, and satisfy the following relations
\begin{equation}
\label{comrul}
\begin{array}{ll}
  a^\dagger_ia^\dagger_j=a_ja_i=0 & \text{if}\,\, i\geq j\,, \\
  a_ia^\dagger_j=0 & \text{if}\,\, i\neq j\,.
\end{array}
\end{equation}
In addition, for any $i$ the following identity holds
\begin{equation}
\label{comrul2}
a_ia^\dag_i+\sum_{k\leq i} a^\dag_ka_k=I.
\end{equation}
From now on, for a fixed $i\in\mathbb{N}$ we denote by $s_i$ the sum of creation and annihilation operators with the test function $e_i$, namely
$$
s_i:=a_i+a^\dag_i,
$$
which is generally called the position field operator.
Moreover, for any $m\in\mathbb{N}$ one takes
$$
S_m:=\sum_{i=1}^m s_i.
$$
The distribution $\m_m$ of $S_m$ in the vacuum vector state $\om_\Om:=\langle \cdot\Om, \Om\rangle$ can be deduced using some existing results on monotone convolution \cite{Mu}. First, one notices it is a compactly supported measure on the real line, since $S_m$ is a bounded self-adjoint operator. It is then determined by the moments $u_{m,n}:=\om_{\Om}((S_m)^n)$. As for any $i,n\in\bn$ it is easy to see that $s_i^{2n}=s_i^2$ and $s_i^{2n+1}=s_i$, one has $\m_1=\frac{1}{2}(\d_1+\d_{-1})$, i.e. $\m_1$ is the Bernoulli symmetric law. Concerning the case $\m_n$, $n\geq 2$, the following result gives a recursive formula to compute the atoms and weights for the (necessarily discrete) law of any partial sum of monotone gaussian operators in the vacuum state.
\begin{proposition}
\label{law1}
For any $n\in\mathbb{N}$, the vacuum distribution of $S_I$, for $I:=\{i_1<\cdots < i_n\mid i_j\in \bn\}$ is the discrete measure $\displaystyle \m_n:=\sum_{k=1}^{2^n}b_k^{(n)}\d_{r_k^{(n)}}$, where
$r_1^{(1)}=1,\,\,\,\,\, r_2^{(1)}=-1,\,\,\,\,\,  b_1^{(1)}=b_2^{(1)}=1/2$. Furthermore, for any $j=0,\ldots, 2^{n-1}-1$ one finds
\begin{equation}
\label{ato1}
r_{2j+h}^{(n)}=\frac{r_{j+1}^{(n-1)}+(-1)^h\sqrt{\big(r_{j+1}^{(n-1)}\big)^2+4}}{2}\,, \quad h=1,2
\end{equation}
and for any $k=1,\ldots, 2^n$ one achieves
\begin{equation}
\label{wei1}
b_k^{(n)}=\frac{\prod_{h=1}^{2^{n-1}}\big(r_k^{(n)}-r_h^{(n-1)}\big)}{2\prod_{\substack{h=1 \\ h\neq k}}^{2^{n}}\big(r_k^{(n)}-r_h^{(n)}\big)}\,.
\end{equation}
\end{proposition}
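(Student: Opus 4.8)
The plan is to route everything through the reciprocal Cauchy transform and Muraki's composition law (Theorem~\ref{conv}). First I would record the scalar input: from $\m_1=\tfrac12(\d_1+\d_{-1})$ one gets $\cg_{\m_1}(z)=\frac{z}{z^2-1}$, hence $H_{\m_1}(z)=z-\tfrac1z$. Since the $s_i$ are monotone independent and all Bernoulli distributed, Theorem~\ref{conv} shows the vacuum law of $S_I$ depends only on $n=|I|$; so it suffices to treat $\m_n:=\m_{S_n}$, and applying the theorem to $S_n=S_{n-1}+s_n$ gives
\begin{equation*}
H_{\m_n}(z)=H_{\m_{n-1}}\big(H_{\m_1}(z)\big)=H_{\m_{n-1}}\Big(z-\tfrac1z\Big),\qquad \cg_{\m_n}(z)=\cg_{\m_{n-1}}\Big(z-\tfrac1z\Big).
\end{equation*}
This single identity drives both recurrences.

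For the atoms I would argue by induction. By Theorem~3.1 and Corollary~3.3 of \cite{Has}, $\m_n$ is discrete with $2^n$ atoms, and since $\cg_{\m_n}=1/H_{\m_n}$ is rational with simple poles precisely at the atoms (the residues $b^{(n)}_k>0$ preclude cancellation between numerator and denominator), the atoms are exactly the zeros of $H_{\m_n}$. Assuming the zeros of $H_{\m_{n-1}}$ are the $r^{(n-1)}_h$, the identity above forces $H_{\m_n}(z)=0$ to mean $z-\tfrac1z=r^{(n-1)}_h$, i.e. $z^2-r^{(n-1)}_h z-1=0$; the discriminant $\big(r^{(n-1)}_h\big)^2+4$ is strictly positive, so each parent yields the two distinct real roots displayed in \eqref{ato1}. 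Distinct parents give disjoint children (because $z\mapsto z-\tfrac1z$ separates the two values), so one obtains exactly $2\cdot 2^{n-1}=2^n$ pairwise distinct atoms, matching the count, with base case $r^{(1)}_{1,2}=\pm1$.

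For the weights, each $b^{(n)}_k$ is the residue of $\cg_{\m_n}$ at $r^{(n)}_k$, equal to $1/H_{\m_n}'\big(r^{(n)}_k\big)$. Writing $\cg_{\m_n}=P_n/Q_n$ with $Q_n(z)=\prod_{h=1}^{2^n}\big(z-r^{(n)}_h\big)$ immediately produces the factor $Q_n'\big(r^{(n)}_k\big)=\prod_{h\neq k}\big(r^{(n)}_k-r^{(n)}_h\big)$ in the denominator of \eqref{wei1}. To reach the numerator $P_n\big(r^{(n)}_k\big)$ I would push the identity $\cg_{\m_n}(z)=\cg_{\m_{n-1}}(z-\tfrac1z)$ through the residue: as $z\to r^{(n)}_k$ the inner argument tends to the parent atom $r^{(n-1)}_{j+1}$, at which $\cg_{\m_{n-1}}$ has residue $b^{(n-1)}_{j+1}$, and the chain rule contributes $H_{\m_1}'\big(r^{(n)}_k\big)=1+\big(r^{(n)}_k\big)^{-2}$; this gives the clean recursion $b^{(n)}_k=\big(r^{(n)}_k\big)^2\big/\big(\big(r^{(n)}_k\big)^2+1\big)\,b^{(n-1)}_{j+1}$. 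Unfolding it, together with the factorisation $P_n(z)=z\prod_l\big(z^2-t^{(n-1)}_l z-1\big)$ in terms of the zeros $t^{(n-1)}_l$ of $\cg_{\m_{n-1}}$, should reproduce the product form \eqref{wei1}, these being the weights of \cite{Has}. I expect this last symmetric-function bookkeeping — reconciling the recursive residue with the closed product while tracking the parent–child pairing and the signs — to be the main obstacle; by contrast the atom recurrence \eqref{ato1} drops out of the composition law with almost no effort.
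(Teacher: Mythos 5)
Your derivation of \eqref{ato1} is exactly the paper's: reduce to $S_n$ by monotone i.i.d.-ness, apply Theorem \ref{conv} to get $H_{\m_n}=H_{\m_{n-1}}\circ H_{\m_1}$ with $H_{\m_1}(z)=(z^2-1)/z$, and solve the quadratic $z^2-r^{(n-1)}_{j+1}z-1=0$; no issue there. The gap is in the weights, precisely at the step you yourself flag as ``the main obstacle''. The paper does no residue unfolding at all: for \eqref{wei1} it switches to the \emph{other} factorisation, $\m_n=\m_1\rhd\m_{2,n}$ (Bernoulli factor on the outside), uses affinity of $\rhd$ in its first argument to write $\m_n=\tfrac12\,\d_1\rhd\m_{n-1}+\tfrac12\,\d_{-1}\rhd\m_{n-1}$, and then simply cites Theorem 3.1 of \cite{Has}, which computes the weights of $\d_{\pm1}\rhd\m_{n-1}$ (essentially $1/H_{\m_{n-1}}'(x)$ at each solution $x$ of $H_{\m_{n-1}}(x)=\pm1$). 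Your chain-rule residue identity is correct as far as it goes: from $\cg_{\m_n}(z)=\cg_{\m_{n-1}}(z-1/z)$ one does get $b^{(n)}_k=\frac{(r^{(n)}_k)^2}{(r^{(n)}_k)^2+1}\,b^{(n-1)}_{j+1}$. But since the promised unfolding into the product form is never carried out, the proposal as written does not prove \eqref{wei1}.

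Moreover, that unfolding cannot succeed, because your (correct) recursion contradicts \eqref{wei1} read literally. Test $n=2$: the atoms are $\pm\frac{1+\sqrt5}{2}$ and $\pm\frac{\sqrt5-1}{2}$; your recursion gives weights $\frac{5+\sqrt5}{20}$ and $\frac{5-\sqrt5}{20}$, which sum to $1$ and reproduce the moments $m_2=2$, $m_4=5$ of $S_2$; formula \eqref{wei1}, by contrast, evaluates to $\pm\frac{1}{4\sqrt5}$ on the four atoms (two values negative, total mass $0$), so it is not even a probability assignment. What the paper's appeal to \cite{Has} actually yields is the branch-restricted formula
\[
b^{(n)}_k=\frac{\epsilon_k\prod_{h=1}^{2^{n-1}}\big(r^{(n)}_k-r^{(n-1)}_h\big)}{2\prod_{h\neq k,\ \epsilon_h=\epsilon_k}\big(r^{(n)}_k-r^{(n)}_h\big)},\qquad
\epsilon_k:=H_{\m_{n-1}}\big(r^{(n)}_k\big)\in\{\pm1\},
\]
where the denominator runs only over the $2^{n-1}$ atoms solving the same equation $H_{\m_{n-1}}=\epsilon_k$; this does agree with your recursion. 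So the ``symmetric-function bookkeeping'' you postponed is not deferred algebra but an actual inconsistency: the sound conclusion is to keep your residue recursion (or the corrected product above) and recognise that \eqref{wei1} as printed is misstated, rather than to try to massage your computation into it.
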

\begin{proof}
We first recall that $(s_i)_{i\geq 1}$ is a family of monotone independent and identically distributed self-adjoint random variables in $(\gam_o,\om_{\Om})$, where $\gam_o$ is the $*$-algebra generated by $\{a_i\mid i\in\bn\}$. Therefore, it is enough to prove the statement for $S_n$, exploiting Theorem \ref{conv}.  Indeed, from \eqref{htran}, it follows
$$
H_{\m_n}(z)=H_{\m_{n-1}}(H_{\m_{1}}(z)),
$$
as monotone convolution is associative, and the reciprocal Cauchy transform of $s_i$ is $H_{s_i}(z)=\frac{z^2-1}{z}$ for any $i$. Thus, any zero, say for simplicity $r$, of $H_{\m_n}$ satisfies the following
$$
\frac{r^2-1}{r}=r_j^{(n-1)},
$$
for each $j=1,\ldots, 2^{n-1}$, and one achieves \eqref{ato1}. Finally, after denoting $\m_{2,n}$ the vacuum distribution of $s_2+\cdots + s_n$, \eqref{htran} gives $\m_n=\m_1\rhd \m_{2,n}$, and \eqref{wei1} follows from Theorem 3.1 in \cite{Has}, as monotone convolution is affine in the first argument.
\end{proof}
Since $\m_1$ is a symmetric measure, a standard induction procedure with \eqref{ato1} and \eqref{wei1} give any $\m_n$ is symmetric too. Furthermore, the $2^n$ points of the support of the vacuum distribution for the sum of $n$ position operators come in inverse pairs, when $n\geq 2$. Indeed, fix $r_0$ an element in the support of $\m_n$, for some $n\geq 1$. Then \eqref{ato1} entails that both
$$
r_1:=\frac{r_0+\sqrt{r_0^2+4}}{2}
$$
and
$$
r_2:=\frac{-r_0+\sqrt{(-r_0)^2+4}}{2}
$$
are in the support of $\m_{n+1}$, and trivially $r_1r_2=1$. As a consequence, one achieves all of the atoms of $\m_n$ just computing half of the positive ones by \eqref{ato1}, as soon as $n$ is at least $2$.

In Figure \ref{fig:distributions} we report the plots for the vacuum laws of $S_n$, $n\leq 4$.
\begin{figure}[b!]
\centering
\subfloat[][\emph{Distribution for $n=1$}.]
{\includegraphics[width=.45\textwidth]{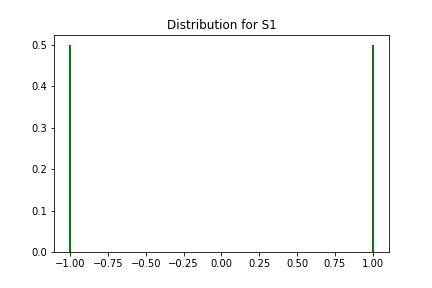}} \quad
\subfloat[][\emph{Distribution for $n=2$}.]
{\includegraphics[width=.45\textwidth]{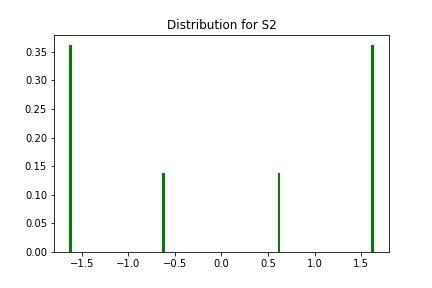}} \\
\subfloat[][\emph{Distribution for $n=3$}.]
{\includegraphics[width=.45\textwidth]{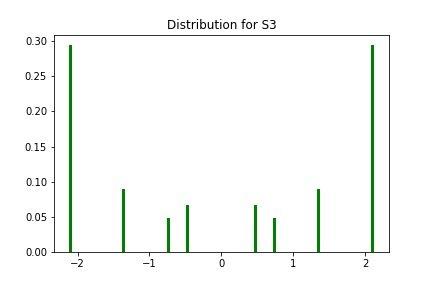}} \quad
\subfloat[][\emph{Distribution for $n=4$}.]
{\includegraphics[width=.45\textwidth]{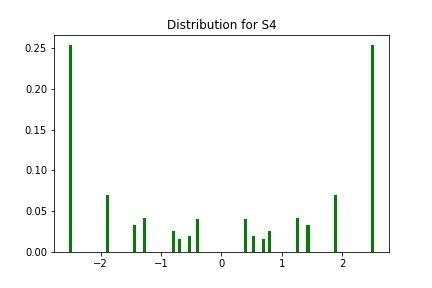}}
\caption{Vacuum distribution of $S_n$, $n=1,\ldots,4$.}
\label{fig:distributions}
\end{figure}

In what follows we give a small error approximation for the extreme values of the support of $\mu_n$, a result appearing useful if compared with the content of Section \ref{sec4}.
\begin{proposition}
Under the notations introduced above, for any $n\geq 1$ one has
\begin{equation}
\label{estim}
\sqrt{2n-\sqrt{2n}}\leq r^{(n)}_{2^n} < \sqrt{2n}
\end{equation}
\end{proposition}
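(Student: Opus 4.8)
The plan is to track only the right endpoint of $\supp\mu_n$ and reduce \eqref{estim} to a one-dimensional recurrence. Write $R_n$ for the largest atom of $\mu_n$ (denoted $r^{(n)}_{2^n}$ above), and set $x_n:=R_n^2$. By \eqref{ato1} every atom of $\mu_n$ is of the form $\tfrac{r\pm\sqrt{r^2+4}}{2}$ for some atom $r$ of $\mu_{n-1}$; since $\sqrt{r^2+4}>|r|$ the sign $+$ always produces the positive atoms, and the map $g(r):=\tfrac{r+\sqrt{r^2+4}}{2}$ is strictly increasing. Hence the largest atom of $\mu_n$ is $g$ applied to the largest atom of $\mu_{n-1}$, giving the recursion $R_n=g(R_{n-1})$ with $R_1=1$; moreover $g(r)>r$, so $(R_n)_n$ increases and $R_n\ge 1$ for all $n$.

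The heart of the argument is to recast this recursion in terms of $x_n$. From $2R_n-R_{n-1}=\sqrt{R_{n-1}^2+4}$ one squares to obtain $R_n^2=R_nR_{n-1}+1$, i.e. $R_n-R_{n-1}=1/R_n$. Substituting $R_{n-1}=R_n-1/R_n$ and squaring once more yields the exact increment
$$x_n-x_{n-1}=2-\frac{1}{x_n}\,.$$
I view obtaining this clean identity as the main (and really the only nontrivial) step; once it is available both inequalities fall out by summation.

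For the upper bound, $x_n>0$ forces $x_n-x_{n-1}<2$, so telescoping from $x_1=1$ gives $x_n\le 2n-1<2n$, that is $R_n<\sqrt{2n}$. For the lower bound I would first note that $x_n\ge 1$ makes $x_n-x_{n-1}=2-1/x_n\ge 1$, whence $x_k\ge k$ for every $k$; telescoping the exact identity then gives
$$x_n=2n-1-\sum_{k=2}^{n}\frac{1}{x_k}\ge 2n-1-\sum_{k=2}^{n}\frac{1}{k}=2n-H_n\,,$$
where $H_n$ is the $n$-th harmonic number. It remains to invoke the elementary bound $H_n\le 1+\ln n\le\sqrt{2n}$, valid for all $n\ge 1$, which yields $x_n\ge 2n-\sqrt{2n}$ and hence $R_n\ge\sqrt{2n-\sqrt{2n}}$.

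The only places demanding care are the monotonicity argument guaranteeing that $R_n=g(R_{n-1})$ (so that one is genuinely following the right endpoint and not some other atom), and the final comparison $H_n\le\sqrt{2n}$; everything else is bookkeeping. The estimate is moreover essentially sharp, since $\sum_{k\ge 2}1/x_k$ grows only logarithmically, consistent with the limit $R_n/\sqrt n\to\sqrt2$ predicted by the monotone central limit theorem.
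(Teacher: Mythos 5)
Your proof is correct, and it takes a genuinely different route from the paper's. Both arguments start from the same recursion $R_n=g(R_{n-1})$, $R_1=1$, with $g(r)=\tfrac{r+\sqrt{r^2+4}}{2}$ coming from \eqref{ato1}; your monotonicity argument showing that the right endpoint really does follow this recursion (the $+$ branch dominates and $g$ is increasing) is a useful clarification of a point the paper leaves implicit. From there the paper runs two separate inductions on \eqref{estim} itself: the upper bound via the inequality $\sqrt{2n}+\sqrt{2n+4}<2\sqrt{2(n+1)}$, and the lower bound via a chain of squarings that bottoms out at the quartic inequality $4m^4+12m^3-4m^2-24m+1\geq 0$. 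You instead convert the recursion into the exact identity $x_n-x_{n-1}=2-\tfrac{1}{x_n}$ for $x_n=R_n^2$ (your algebra here is right: $2R_n-R_{n-1}=\sqrt{R_{n-1}^2+4}$ squares to $R_n-R_{n-1}=1/R_n$, and squaring once more gives the increment formula), then telescope to $x_n=2n-1-\sum_{k=2}^{n}1/x_k$. This buys quite a lot: the upper bound is immediate from positivity; the lower bound reduces, via $x_k\geq k$, to the comparison $H_n\leq\sqrt{2n}$; and the identity exhibits the deficit $2n-x_n$ as $1+\sum_{k=2}^n 1/x_k$, which grows only logarithmically, so your intermediate bound $x_n\geq 2n-H_n$ is in fact sharper than the stated $2n-\sqrt{2n}$ and makes the convergence $R_n/\sqrt{n}\to\sqrt{2}$ transparent rather than incidental. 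The one step you assert without proof is $1+\ln n\leq\sqrt{2n}$ for $n\geq 1$; it is true and elementary (the function $x\mapsto\sqrt{2x}-\ln x-1$ on $[1,\infty)$ attains its minimum at $x=2$, where it equals $1-\ln 2>0$), but since it is the only unverified link in the chain you should include this one-line check. With that added, your argument is a complete and cleaner replacement for the paper's proof.
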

\begin{proof}
We start by showing the right inequality, which is true for $n=1$ as $r_2^{(1)}=1$. Suppose now it holds for any $k\leq n$. Then, since for any $n$ one straightforwardly sees
\begin{equation}
\label{ltrot}
\sqrt{2n}+\sqrt{2n+4}< 2\sqrt{2(n+1)},
\end{equation}
\eqref{ato1} and \eqref{ltrot} give
\begin{align*}
r^{(n+1)}_{2^{n+1}}&=\frac{1}{2}\bigg(r^{(n)}_{2^n}+\sqrt{\big(r^{(n)}_{2^n}\big)^2+4}\bigg) \\
&\leq \frac{1}{2}\bigg(\sqrt{2n}+\sqrt{2n+4}\bigg) < \sqrt{2(n+1)},
\end{align*}
after recalling the map $\br \ni x\mapsto x+\sqrt{x^2+4}$ is increasing.
Proving the inequality $\sqrt{2n-\sqrt{2n}}\leq r^{(n)}_{2^n}$ is longer. It indeed holds for $n=1$. For the remaining cases, after denoting $m:=2n$, we firstly show that
\begin{equation}
\label{rtrot}
\sqrt{m-\sqrt{m}}+\sqrt{\big(m-\sqrt{m}\big)^2+4}\geq 2\sqrt{m+2-\sqrt{m+2}}.
\end{equation}
This is indeed satisfied when $n=1$. Since for any $m\geq 3$, one further has
$$
\sqrt{\big(m-\sqrt{m}\big)^2+4}\geq \sqrt{m-\sqrt{m}+4},
$$
when $n\geq 2$, the inequality \eqref{rtrot} holds true if
$$
\sqrt{m-\sqrt{m}}+\sqrt{\big(m-\sqrt{m}\big)+4}\geq 2\sqrt{m+2-\sqrt{m+2}}.
$$
In fact, after squaring one finds this is equivalent to
$$
\sqrt{\big(m-\sqrt{m}\big)\big(m-\sqrt{m}+4\big)}\geq m+2+\sqrt{m}-2\sqrt{m+2},
$$
and a further squaring gives
$$
\sqrt{m}+\frac{m+3}{m+2}-\frac{m+2+\sqrt{m}}{\sqrt{m+2}}\leq 0.
$$
The last inequality is equivalent to
$$
4m^4+12m^3-4m^2-24m+1\geq 0,
$$
which is automatically satisfied since the map $f(x):=4x^4+12x^3-4x^2-24x+1$ is strictly increasing in $[3,+\infty)$, and $f(3)>0$.

Suppose now that the left inequality in \eqref{estim} holds for each $k\leq n$. One can extend its validity to $k=n+1$ by means of \eqref{rtrot} and \eqref{ato1}.
\end{proof}
As a consequence of \eqref{estim}, the condition that the atoms of $\m_n$ come in inverse pair suggests that the littlest positive of them approaches $0$ for $n$ going to $+\infty$, and moreover
$$
\lim_n \frac{r^{(n)}_{2^n}}{\sqrt{n}}=\sqrt{2}.
$$
The last result agrees with the central limit theorem for monotonically independent random variables \cite{Mur}. Namely, the sum of $n$ position operators, rescaled by a factor $\frac{1}{\sqrt{n}}$, weakly converges to the arcsine law supported in $(-\sqrt{2},\sqrt{2})$ for $n\rightarrow \infty$. In addition, \eqref{estim} suggests $\bigg(\frac{r^{(n)}_{2^n}}{\sqrt{n}}\bigg)_n$ approaches $\sqrt{2}$ from the left, and it is an increasing sequence, since by \eqref{ato1} it is not difficult to prove that for any $n$
$$
\bigg(\frac{2}{\sqrt{n}}-\frac{1}{\sqrt{n+1}}\bigg)r^{(n)}_{2^n} < \sqrt{\frac{(r^{(n)}_{2^n})^2+4}{n+1}}.
$$
The reader is referred to \cite{CGW} for similar results in the so-called weakly monotone case.

\section{the norm for sums of position operators}
\label{sec4}

As previously pointed out, our approach to compute the norm for partial sums of position operators on monotone Fock space provides an investigation of their spectrum. To this goal, we begin with the definition of the right creators and annihilators on $\cf_m$. For any $i\in\mathbb{N}$, take $b^\dag_i\Om=e_i$, $b_i\Om=0$ and
\begin{align*}
&b^{\dagger}_ie_{(i_1,i_2,\ldots,i_k)}:=\left\{
\begin{array}{ll}
e_{(i_1,\ldots,i_k,i)} & \text{if}\, i> i_k\,,\\
0 & \text{otherwise}\,,\\
\end{array}
\right. \\
&b_ie_{(i_1,i_2,\ldots,i_k)}:=\left\{
\begin{array}{ll}
e_{(i_1,i_2,\ldots,i_{k-1})} & \text{if}\, k\geq 1\,\,\,\,\,\, \text{and}\,\,\,\,\,\, i=i_k\,,\\
0 & \text{otherwise}\,.\\
\end{array}
\right.
\end{align*}
Since they are continuous on $\cf_m^o$, they can be uniquely extended to the whole $\cf_m$ where they are mutually adjoint, and endowed with unital norm. For any $i$, the right position operator is defined as $r_i:=b_i+b^\dag_i$.

Right creators and annihilators are a powerful tool to study the von Neumann algebra generated by position operators in the $q$-deformed case, $-1<q<1$ \cite{BKS}. There, the commutant of the von Neumann algebra is generated by right position operators, and the vacuum is a cyclic vector. The latter property entails that the support of the vacuum distribution for sums of position operators covers their spectrum. In the monotone case, the $C^*$-algebra generated by position operators, say $\mathfrak{S}_m$, is irreducible, as follows (up to replacing $\bz$ with $\bn$) from \cite{CFL}, Propositions 5.9 and 5.13. Consequently, $\Om$ is not cyclic for the commutant, and we are forced to reduce to suitable $C^*$-subalgebras of $\mathfrak{S}_m$.

One preliminary notices that the abelian $C^*$-algebra generated by $s_1$ contains the identity operator on $\cf_m$, as $s^2_1=I$. The same happens when one takes the $C^*$-algebras generated by $S_2$ and $S_3$. Indeed, one has $3S_2^2-S_4^4=I$, and $7S_3^2-13S_3^4+7S_3^6-S_3^8=I$, respectively. More in general, using the following identities
\begin{align*}
&s_i^{2n}=s_i^2,\,\,\,\,\,\,\,\,\,\,\,\,\,\,s_i^{2n+1}=s_i,\\
&s_i^2s_{i+1}=s_{i+1},\\
&s_{i+1}^2+\sum_{j=1}^i s_js_{j+1}^2s_j=I,
\end{align*}
coming from \eqref{comrul}, \eqref{comrul2}, and Lemma 5.4 and Proposition 5.13 of \cite{CFL}, we conjecture for any $n$ there exists a finite sequence of scalars $(\a_k)_k$ such that
$$
\sum_{k=1}^{\frac{(n-1)n}{2}+1}\a_{2k}S_n^{2k}=I.
$$
Its proof is not in the aim of these notes and it does not affect the following results. Thus, from now on we denote by $\cs_n$ the abelian $C^*$-algebra on $\cf_m$ generated by $S_n$ for any $n$, and tacitely suppose it contains $I$. Next theorem shows that $\cs_n$ are indeed the $C^*$-subalgebras of $\mathfrak{S}_m$ we are looking for, and is crucial to prove that $\m_n$ is supported on the whole spectrum of $S_n$.
\begin{theorem}
\label{norm}
For any $n\in\mathbb{N}$, the spectral measure $\nu_{\Om,\Om}$ is basic on $\sp(\cs_n)$.
\end{theorem}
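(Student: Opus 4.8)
The plan is to prove basicness through the commutant, as the von Neumann-algebraic characterization (see \cite{Dix}) tells us that for an abelian algebra acting on a Hilbert space the spectral measure $\nu_{\Omega,\Omega}$ is basic if and only if $\Omega$ is a cyclic (totalizing) vector for the commutant $\cs_n'=\{S_n\}'$. Since $\sigma(S_n)$ is finite, $\cs_n$ contains all spectral projections and hence equals its own bicommutant, so the characterization applies verbatim. Thus the entire proof reduces to showing that $\Omega$ is cyclic for $\cs_n'$.

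To exploit the structure of $S_n$, I would first decompose $\cf_m$ by the ``tail'' of a basis vector. Writing an increasing multi-index as a prefix $(i_1<\cdots<i_p)$ with $i_p\le n$ followed by a tail $T=(t_1<\cdots<t_l)$ with $t_1>n$, let $\ch_T$ be the closed span of the $e_{(i_1,\ldots,i_p,T)}$ with $T$ fixed. From the action formulas for $a_i,a_i^\dag$ one checks that each $s_i$ with $i\le n$ preserves $\ch_T$, so $\ch_T$ reduces $\cs_n$ and $\cf_m=\bigoplus_T\ch_T$. Forgetting the tail gives a unitary $\ch_T\cong\cf^{(n)}$ onto the $2^n$-dimensional monotone Fock space over $\{1,\ldots,n\}$, intertwining $S_n|_{\ch_T}$ with one fixed operator $S_n^{(0)}$; hence $S_n=\bigoplus_T S_n^{(0)}$ has uniform infinite multiplicity. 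By Proposition \ref{law1} the vacuum law $\mu_n=\nu_{\Omega,\Omega}$ has exactly $2^n$ distinct atoms, so $S_n^{(0)}$ has simple spectrum, its eigenvectors $\{v_\lambda^{(0)}\}$ form a basis of $\cf^{(n)}$, and each overlap $c_\lambda:=\langle\Omega^{(0)},v_\lambda^{(0)}\rangle$ is nonzero, its modulus squared being the weight $b_\lambda>0$.

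Two families of operators in $\cs_n'$ then suffice to sweep out $\cf_m$. First, the spectral projections $P_\lambda=\mathbf 1_{\{\lambda\}}(S_n)$ lie in $\cs_n\subseteq\cs_n'$ (the algebra being abelian) and restrict on each $\ch_T\cong\cf^{(n)}$ to the rank-one projection onto $v_\lambda^{(0)}$. Second, the right creators $b_j^\dag$ with $j>n$ commute with every $s_i$, $i\le n$ --- the left operators acting on the front of an ordered index and the right ones on the back --- so that $b_j^\dag\in\{S_n\}'=\cs_n'$, and applied to the vacuum they build tails, $b_{t_l}^\dag\cdots b_{t_1}^\dag\Omega=e_T$. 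Consequently, for every tail $T$ the vector $e_T$, which corresponds to the vacuum $\Omega^{(0)}$ inside $\ch_T$, lies in $\cs_n'\Omega$; applying $P_\lambda$ yields $P_\lambda e_T=c_\lambda v_{\lambda,T}\in\cs_n'\Omega$, where $v_{\lambda,T}$ denotes the copy of $v_\lambda^{(0)}$ in $\ch_T$. Dividing by $c_\lambda\ne0$ places every $v_{\lambda,T}$ in the closed span of $\cs_n'\Omega$; since these vectors span $\bigoplus_T\ch_T=\cf_m$, the vacuum is cyclic for $\cs_n'$, and basicness of $\nu_{\Omega,\Omega}$ follows.

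The main obstacle is the structural input rather than the closing deduction. One must set up the tail-decomposition and the identification $S_n|_{\ch_T}\cong S_n^{(0)}$ carefully, and --- most delicately --- verify the commutation $[b_j^\dag,s_i]=0$, so that the right creators genuinely belong to $\cs_n'$ and not merely to the irreducible algebra $\mathfrak{S}_m$; this requires tracking the boundary cases where a prefix is empty or a created index meets the first tail index. Once these relations and the uniform splitting $S_n=\bigoplus_T S_n^{(0)}$ are secured, the positivity of all $2^n$ weights furnished by Proposition \ref{law1} makes the cyclicity argument, and hence the theorem, immediate.
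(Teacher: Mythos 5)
Your proposal is correct, but it follows a genuinely different route from the paper's own proof. Both arguments make the same first reduction (via Dixmier, basicness follows once $\Om$ is cyclic for $\cs_n'$), and both exploit right operators with indices beyond $n$; after that they diverge. The paper proves $[s_k,r_{k+j}]=0$ for the right \emph{position} operators $r_{n+j}\in\cs_n'$ and then hand-crafts commutant elements --- $A_n=S_n-r_n$, $B_n=S_n(S_n-r_n)-(n-1)I$, then $C_n$, $D_n$, $E_n$ --- through a recursive elimination scheme whose purpose is to produce each $e_j$, $j\leq n-1$, from the vacuum; this construction is fully explicit only for $n\leq 3$ and is sketched for general $n$. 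You instead decompose $\cf_m=\bigoplus_T\ch_T$ by tails, so that $S_n=\bigoplus_T S_n^{(0)}$ acts with uniform multiplicity by one fixed operator on a $2^n$-dimensional space; Proposition \ref{law1} (exactly $2^n$ distinct atoms with strictly positive weights) then forces $S_n^{(0)}$ to have simple spectrum with every eigenvector non-orthogonal to the vacuum, and cyclicity follows by hitting the tail vectors $b_{t_l}^\dag\cdots b_{t_1}^\dag\Om=e_T$ with the spectral projections $P_\lambda\in\cs_n\subseteq\cs_n'$. Your two nonstandard ingredients both check out: the commutation $[s_i,b_j^\dag]=0$ holds for $i\leq n<j$ (strict inequality is essential --- it fails for $i=j$, which is why the paper works with $r_j$ rather than $b_j^\dag$, but either choice serves), and the intertwining of $S_n|_{\ch_T}$ with $S_n^{(0)}$ survives the boundary cases (empty prefix, full prefix). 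What the two approaches buy: the paper's argument is purely algebraic --- it never needs positivity or distinctness of the weights and atoms, and it exhibits completely explicit commutant operators --- but it is combinatorially heavy and left partially sketched; your argument is shorter, uniform and complete in $n$, at the price of importing the atom count from Proposition \ref{law1}, and it has the bonus of making the multiplicity structure of $S_n$ transparent, so that $\s(S_n)=\s(S_n^{(0)})=\supp(\m_n)$ (the content of Theorem \ref{norm2}) and the finite-dimensionality of $\cs_n$ drop out as immediate byproducts.
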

\begin{proof}
Since \cite{Dix}, Ch. 7 Proposition 2, it is enough to prove that $\Om$ is a cyclic vector for the commutant $\cs_n'$ of $\cs_n$, $n\in\mathbb{N}$. Namely, by usual approximation arguments, we need to show that for any $k\geq 0$, $i_1<i-2<\cdots i_k$, there exists $T\in \cs_n'$ such that $T\Om=e_{i_1}\otimes \cdots e_{i_k}$, where $k=0$ gives $\Om$.

\noindent To this aim, we firstly check that for any $k\in\mathbb{N}$
$$
[s_k,r_{k+j}]=0,\,\,\,\,\,\,\,\, j=0,1,2,\ldots
$$
Indeed, after fixing $j\geq 0$, one has
\begin{equation*}
s_kr_{k+j}\Om=r_{k+j}s_k\Om=\left\{
\begin{array}{ll}
                              e_k\otimes e_{k+j} & \text{if}\,\,\,\,\, j\geq 1, \\
                              \Om & \text{if}\,\,\,\,\, j=0. \\
                            \end{array}
                            \right.
\end{equation*}
Moreover, for any $e_l\in\ch$
\begin{equation*}
s_kr_{k+j}e_{l}=r_{k+j}s_ke_l=\left\{
\begin{array}{llll}
                                                            e_{k+j} & \text{if}\,\,\,\,\, l=k, \\
                              e_{k}\otimes e_l\otimes e_{k+j} & \text{if}\,\,\,\,\, k<l<k+j, \\
                              e_k & \text{if}\,\,\,\,\, l=k+j, \\
                              0 & \text{otherwise}. \\
                            \end{array}
                            \right.
\end{equation*}
Finally, for $n\geq 2$ and $i_1<\cdots i_n$,
$$
s_kr_{k+j}(e_{i_1}\otimes \cdots \otimes e_{i_n})
$$
and
$$
r_{k+j}s_k(e_{i_1}\otimes \cdots \otimes e_{i_n})
$$
are both equal to
\begin{equation*}
\left\{
\begin{array}{lllll}
                                                            e_{i_2}\otimes \cdots \otimes e_{i_{n-1}} & \text{if}\,\,\,\,\, j\geq1, k+j=i_n, k=i_1, \\
                              e_k\otimes e_{i_1}\otimes \cdots \otimes e_{i_{n-1}} & \text{if}\,\,\,\,\, j\geq1, k+j=i_n, k<i_1, \\
                              e_{i_2}\otimes \cdots \otimes e_{i_{n}}\otimes e_{k+j} & \text{if}\,\,\,\,\, j\geq1, k+j>i_n, k=i_1, \\
                              e_{k}\otimes e_{i_1}\otimes \cdots \otimes e_{i_{n}}\otimes e_{k+j} & \text{if}\,\,\,\,\, j\geq1, k+j>i_n, k<i_1,\\
                              0 & \text{otherwise}. \\
                            \end{array}
                            \right.
\end{equation*}
As a consequence, for each $j\geq 0$
\begin{equation}
\label{commu1}
[S_n,r_{n+j}]=0
\end{equation}
i.e. any $r_{n+j}$ commutes with each element of the $*$-algebra generated by $S_n$. Thus, a standard approximation argument gives $r_{n+j}\in\cs_n'$.
Furthermore, one has
\begin{equation}
\label{commu2}
I\Om=\Om
\end{equation}
and
$$
r_{i_n}\cdots r_{i_2}r_{i_1}\Om=e_{i_1}\otimes e_{i_2}\otimes \cdots \otimes e_{i_n},
$$
for $i_1<i_2<\cdots < i_n$ and any $n\geq 1$. Then it then turns out $\Om$ is cyclic for $\cs_1'$. Since
\begin{equation}
\label{commu3}
(S_2-r_2)\Om=e_1,
\end{equation}
from \eqref{commu1} and \eqref{commu3}, it follows $e_i=T_i\Om$, where $T_i\in\cs_2'$ and $i\in\bn$, whereas \eqref{commu2} gives $\Om=I\Om$. Moreover, for any $i_1<i_2<\cdots < i_n$, it results
$$
r_{i_n}\cdots r_{i_2}\overline{r}_{i_1}\Om=e_{i_1}\otimes e_{i_2}\otimes \cdots \otimes e_{i_n},
$$
where $\overline{r}_{i_1}=(S_2-r_2)$ or $\overline{r}_{i_1}=r_{i_1}$, according to whether $i_1=1$ or $i_1>1$. As a consequence, $\overline{\cs_2'\Om}=\cf_m$.

The general case $n\geq 3$ can be performed as follows. As a first step one shows that each $e_j$, $j=1,\ldots, n-1$ is obtained by the action onto the vacuum of suitable operators belonging to $\cs_n'$. Pursuing this requires the replacement of \eqref{commu3} with the more general
\begin{equation}
\label{commua}
A_n\Om:=(S_n-r_n)\Om=\sum_{i=1}^{n-1}e_i,
\end{equation}
and the detection of some operators in $\cs_n'$ mapping the vacuum into $e_j$, $j=2,\ldots n-1$. In the latter case $e_1$ is obtained from \eqref{commua}.

\noindent More in detail, for $n=3$, a possible choice for the above mentioned operators goes through
$$
B_3:=S_3(S_3-r_3)-2I.
$$
$B_3$ indeed gives $e_2$ by means of $S_3B_3\Om$. This, together with \eqref{commua}, \eqref{commu1} and \eqref{commu2} allows us to find $\Om$  and the elements of the canonical basis of $\ell^2(\bn)$. Finally, it results to be cyclic for $\cs_3'$, since for $n\geq 2$
\begin{equation}
\label{base}
r_{i_n}\cdots r_{i_3}\overline{r}_{i_2}\overline{r}_{i_1}\Om=e_{i_1}\otimes e_{i_2}\otimes \cdots \otimes e_{i_n},
\end{equation}
where
$$
\overline{r}_{i_2}\overline{r}_{i_1}:=\left\{\begin{array}{ll}
                                               B_3& \text{if}\,\,\, i_1=1,i_2>2, \\
                                               r_{i_2}(A_3-S_3B_3) & \text{if}\,\,\, i_1=1,i_2>2, \\
                                               r_{i_2}S_3B_3 & \text{if}\,\,\, i_1=2, \\
                                               r_{i_2}r_{i_2} & \text{if}\,\,\, i_1>2.
                                             \end{array}
                                             \right.
$$
The case $n\geq 4$ appears more complicated. To achieve $e_j$, $j=1,\ldots,n-1$ it seems natural to start with the natural generalisation of $B_3$, given by
$$
B_n:=S_n(S_n-r_n)-(n-1)I.
$$
Then we observe that
\begin{align}
\label{commub}
\begin{split}
S_nB_n\Om&=S_n\bigg(\sum_{1\leq i<j\leq n-1} e_i\otimes e_j\bigg) \\
&=\sum_{i=2}^{n-1}(i-1)e_i + \sum_{1\leq i<j<k\leq n-1} e_i\otimes e_j\otimes e_k.
\end{split}
\end{align}
By means of \eqref{commua} and \eqref{commub}, one has
\begin{equation*}
\big[(n-2)A_n-S_nB_n\big]\Om=\sum_{i=1}^{n-2}(n-i-1)e_i\,\,\,- \sum_{1\leq i<j<k\leq n-1} e_i\otimes e_j\otimes e_k.
\end{equation*}
After defining
\begin{align*}
C_n\Om:=&\bigg(S_n[(n-2)A_n-S_nB_n] - \frac{(n-2)(n-3)}{2}r_n^2\bigg)\Om \\
=& \sum_{i=1}^{n-3}\sum_{k=i+1}^{n-2}(n-k-i)e_i\otimes e_k-\sum_{i=2}^{n-2}\sum_{k=i+1}^{n-1}(i-1)e_i\otimes e_k,
\end{align*}
one notices it is useful to erase the term $e_{n-2}\otimes e_{n-1}$ from the r.h.s. above to reach our goal. This is achieved by taking
$$
D_n\Om:=[(n-3)B_n+C_n]\Om
$$
As a consequence,
\begin{equation*}
S_nD_n\Om=\sum_{i=2}^{n-1}\g_ie_i + \sum_{i=1}^{n-4}\sum_{j=i+1}^{n-3}\sum_{k=j+1}^{n-1}\l_{ijk}e_i\otimes e_j\otimes e_k,
\end{equation*}
for suitable integers $\g_i$ and $\l_{ijk}$. Next step consists in erasing $e_{n-1}$ from the first sum above, a result obtained by computing
$$
(\g_{n-1}A_n-S_nD_n)\Om.
$$
Then one applies again $S_n$ to the quantity above and iterates the procedure. Finally, one recovers the analogue of the r.h.s. of \eqref{commua}, i.e.
$$
\sum_{i=1}^{n-2}\a_ie_i,
$$
for some integers $\a_i$. Using similar arguments as above, one erases $e_{n-2}$, thus reducing the matter to a linear combination of $e_1,\ldots, e_{n-3}$. Several iterations of the same procedure lead us to remove, in the following order, $e_{n-3},\ldots, e_3$, and thus to find a suitable $E_n\in\cs_n'$ such that
$$
E_n\Om:=\b_1e_1+\b_2e_2,
$$
for $\b_1,\b_2\in\mathbb{Z}$. As a consequence,
$$
\big(S_nE_n-(\b_1+\b_2)r_n^2\big)\Om=\b_2e_1\otimes e_2,
$$
and the last equality allows us to get $e_2$, since
\begin{equation}
\label{star}
\b_2e_2=S_n\big(S_nE_n-(\b_1+\b_2)r_n^2\big)\Om.
\end{equation}
The remaining $e_i$, for $i=3,\ldots, n-1$ can be similarly obtained.

The second step consists in finding the remaining elements of the canonical basis of $\cf_m^o$. This is obtained, \emph{mutatis mutandis}, as in \eqref{base}.
\end{proof}
To get a flavour of the above exposed procedure, in the case $n=4$ one finds \eqref{star} has the following form
$$
-e_2=S_4^2\big(S_4-r_4-S_4\big[S_4(S_4-r_4)-3I+S_4(B_4-3I)\big]\big)\Om.
$$
As a consequence, we have the following
\begin{theorem}
\label{norm2}
For any $n\in\mathbb{N}$, one has
$$
\s(S_n)=\supp(\m_n)
$$
and
$$
\|S_n\|=r_{2^n}^{(n)},
$$
where, as usual, $\s$ denotes the spectrum of an operator.
\end{theorem}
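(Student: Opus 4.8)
The plan is to read off both assertions from Theorem \ref{norm} via the Gelfand correspondence for the unital abelian $C^*$-algebra $\cs_n$. First I would recall the standard fact that, since $\cs_n$ is generated as a unital $C^*$-algebra by the single bounded self-adjoint operator $S_n$, the evaluation map $\f\mapsto \f(S_n)$ is a homeomorphism of the compact Hausdorff space $\sp(\cs_n)$ onto the operator spectrum $\s(S_n)\subset\br$. Under this identification the continuous functional calculus sends $g\in C(\s(S_n))$ to $g(S_n)\in\cs_n$, which realises the Gelfand isomorphism; consequently the spectral measure $\nu_{\Om,\Om}$, characterised by $\langle g(S_n)\Om,\Om\rangle=\int g\,\di\m_n$, is carried onto the vacuum law $\m_n$. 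In other words $\m_n$ is the image of $\nu_{\Om,\Om}$ under the homeomorphism $\sp(\cs_n)\cong\s(S_n)$, so the two measures coincide once the abstract spectrum is identified with the real spectrum.

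Next I would invoke Theorem \ref{norm}, which asserts that $\nu_{\Om,\Om}$ is basic on $\sp(\cs_n)$. By the property of basic measures recalled in Section \ref{sec1bis} — namely that the union of the supports of the $\nu_{x,x}$ is dense in $\sp(\cs_n)$ and each $\nu_{x,x}$ is absolutely continuous with respect to any basic measure — a basic measure is necessarily supported on the whole spectrum of the algebra. Hence $\supp(\nu_{\Om,\Om})=\sp(\cs_n)$. Transporting this equality through the homeomorphism $\sp(\cs_n)\cong\s(S_n)$ and using the identification of $\nu_{\Om,\Om}$ with $\m_n$ yields $\supp(\m_n)=\s(S_n)$, which is the first assertion.

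For the norm I would use that $S_n$ is bounded and self-adjoint, so that $\|S_n\|$ equals the spectral radius $\max\{|\l|:\l\in\s(S_n)\}$. By the first part this maximum is taken over $\supp(\m_n)$, the finite set of atoms described in Proposition \ref{law1}. Since $\m_n$ is symmetric — as noted after Proposition \ref{law1}, its atoms come in inverse pairs and the law is invariant under $x\mapsto -x$ — the extreme values of the support are $\pm r^{(n)}_{2^n}$, with $r^{(n)}_{2^n}>0$ the largest atom isolated in \eqref{estim}. Therefore $\|S_n\|=\max\{r^{(n)}_{2^n},\,|{-}r^{(n)}_{2^n}|\}=r^{(n)}_{2^n}$, completing the proof.

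As for the difficulty, essentially all of the analytic work has already been carried out in Theorem \ref{norm}, whose cyclicity argument for the commutant is the genuine obstacle; what remains here is the translation between the operator-theoretic and the Gelfand-theoretic pictures. The only point demanding a little care is checking that the image of $\nu_{\Om,\Om}$ under the homeomorphism $\sp(\cs_n)\cong\s(S_n)$ really is $\m_n$, since the vacuum law is a priori defined on $\br$ while $\nu_{\Om,\Om}$ lives on the abstract spectrum of $\cs_n$; once this identification is verified through the functional calculus, both conclusions follow immediately.
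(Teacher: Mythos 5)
Your proposal is correct and follows essentially the same route as the paper: identify $\sp(\cs_n)$ with $\s(S_n)$ via $\f\mapsto\f(S_n)$, check that $\m_n$ is the pushforward of $\nu_{\Om,\Om}$ under this homeomorphism, invoke Theorem \ref{norm} to conclude $\supp(\m_n)=\s(S_n)$, and then use self-adjointness together with Proposition \ref{law1} for the norm. Your treatment of the norm step (spectral radius plus symmetry of the atoms) is just a slightly more explicit spelling-out of what the paper compresses into its final sentence.
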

\begin{proof}
We first recall that for any $n$, the map $\Theta_n:\sp(\cs_n)\rightarrow \s(S_n)$ s.t. $\Theta_n(\f)=\f(S_n)$ is a homeomorphism. As a consequence, for any $f\in C(\sp(\cs_n))$
\begin{align*}
\int_{\sp(\cs_n)}f(\f)\di\nu_{\Om,\Om}(\f)&=\langle(S_n)_f\Om,\Om\rangle \\
&=\int_{\s(S_n)}(f\circ \Theta_n^{-1})(\f(S_n))\di\m_n(\f(S_n)),
\end{align*}
where $f\mapsto (S_n)_f$ is the Gelfand isomorphism. Therefore,
$\m_n(B)=\nu_{\Om,\Om}(\Theta_n^{-1}(B))$ for any borelian $B$ in $\s(S_n)$. Hence, by Theorem \ref{norm}, $\m_n$ results to be supported on the whole compact $\s(S_n)$. As $S_n=S_n^*$, the last part of the statement follows from Proposition \ref{law1}.
\end{proof}
The property that $\Om$ is cyclic for any $\cs_n'$ is crucial in the proof of Theorem \ref{norm}. Then, one naturally wonders if the vacuum monotone vector is cyclic for the commutant of any $C^*$-algebra generated by a finite sum of gaussian monotone operators. The following example shows it is not generally true.

Let us take the operator $S_{1,3}:=s_1+s_3$ and denote by $\cs_{1,3}$ the unital $C^*$-algebra generated by it. Here we show there does not exist any $T\in (\cs_{1,3})'$ such that $T\Om=e_2$. To this aim, we preliminary notice that for any $\xi:=e_{i_1}\otimes \ldots \otimes e_{i_m}$ of the canonical basis of $\cf_m^o$ such that $i_1\geq 4$, one finds
$$
S_{1,3}\xi=e_1\otimes \xi + e_3\otimes \xi.
$$
As a consequence, we reduce our matter to the action of $S_{1,3}$ on the set
\begin{equation}
\label{base4}
\{\Om,e_1,e_2,e_3, e_1\otimes e_2, e_1\otimes e_3, e_2\otimes e_3, e_1\otimes e_2\otimes e_3\},
\end{equation}
which is represented by the hermitian matrix $A:=(a_{ij})_{i,j=1,\ldots, 8}$ assuming the form
$$
A=\left(
     \begin{array}{cccccccc}
       0 & 1 & 0 & 1 & 0 & 0 & 0 & 0 \\
       1 & 0 & 0 & 0 & 0 & 0 & 0 & 0 \\
       0 & 0 & 0 & 0 & 1 & 0 & 0 & 0 \\
       1 & 0 & 0 & 0 & 0 & 1 & 0 & 0 \\
       0 & 0 & 1 & 0 & 0 & 0 & 0 & 0 \\
       0 & 0 & 0 & 1 & 0 & 0 & 0 & 0 \\
       0 & 0 & 0 & 0 & 0 & 0 & 0 & 1 \\
       0 & 0 & 0 & 0 & 0 & 0 & 1 & 0 \\
     \end{array}
   \right)
$$
Let $T$ be an element in $\cs_{1,3}'$, with $B:=(b_{ij})_{i,j=1,\ldots, 8}$ its representing matrix on the vectors \eqref{base4}.
The condition $[A,B]=0$ immediately gives $b_{1j}\neq 0$ only when $j=1,2,4,6$. Moreover, the same condition implies $b_{31}=b_{32}=b_{34}=b_{36}=0$. Thus $e_2$ does not belong to $\overline{\cs_{1,3}'\Om}$.

Therefore, it turns out that our approach does not give information on the relation between vacuum law and spectrum for general partial sums of position operators. Nevertheless, in the next lines we will show that, as in the case of vacuum distribution, also the norm depends only on the number of operators in the partial sum.

To this aim, fix an integer $m$ and consider the set $I$ given by a sequence of strictly increasing indices $1\leq i_1<i_2<\cdots < i_n=m$ such that there exists $i_j$, $j=1,\ldots, n$, for which $i_{j}\neq i_{j-1}+1$, where $i_0:= 0$. As usual, we denote $S_I:=\sum_{h=1}^n s_{i_h}$ and we look for the norm of $S_I$. If $J:=\{1, \ldots, m\}\setminus I$, one has
\begin{equation}
\label{direct}
\cf_m=\cf_J\oplus \cf_J^\perp
\end{equation}
where $\cf_J$ denotes the closure in $\cf_m$ of the subspace $\cf_m^o$ generated by the vectors
$$
\{e_{j_1}\otimes \cdots \otimes e_{j_k} \mid k\geq 1, j_1<\cdots <j_k\,\,\,\, \text{and}\,\,\,\, j_l\in J\,\,\, \text{for some}\,\,\, l\},
$$
and $\cf_J^\perp$ is the orthogonal complement of $\cf_J$. The dense subspace of $\cf_J^\perp$ built similarly as $\cf_m^o$ will be denoted by $(\cf_J^\perp)^o$.
After noticing that $\mathbb{C}\Om$ belongs to $\cf_J^\perp$, it not difficult to check that $S_I$ leaves invariant both the subspaces $\cf_J$ and $\cf_J^\perp$. From now on, we denote by $S_{I,J}^\perp$ and $S_{I,J}$ the restrictions of $S_I$ on $\cf_J^\perp$ and $\cf_J$, respectively.
\begin{proposition}
\label{norm3}
Under the above notations, one has $\s(S_{I,J}^\perp)=\s(S_n)$.
\end{proposition}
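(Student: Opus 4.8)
The plan is to recognise that $\cf_J^\perp$, equipped with the restriction $S_{I,J}^\perp$, is nothing but a faithful relabelled copy of the whole space $\cf_m$ carrying the operator $S_n$, so that the two operators are unitarily equivalent and hence isospectral.

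First I would describe the canonical basis of $\cf_J^\perp$ explicitly. Since $\cf_J$ is spanned by the increasing tuples possessing at least one entry in $J$, its orthogonal complement $\cf_J^\perp$ is spanned by $\Om$ together with the increasing tuples $e_{(k_1,\ldots,k_r)}$ all of whose entries lie in $K:=\bn\setminus J=I\cup\{m+1,m+2,\ldots\}$. In other words, $\cf_J^\perp$ is the discrete monotone Fock space built over $\ell^2(K)$. The crucial observation is that $K$, ordered by the natural order of $\bn$, is the strictly increasing sequence $i_1<\cdots<i_n<m+1<m+2<\cdots$, and therefore it has order type $\omega$: the unique increasing bijection $\phi:\bn\to K$ satisfies $\phi(h)=i_h$ for $h=1,\ldots,n$ and $\phi(h)=m+h-n$ for $h>n$.

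Next I would introduce the unitary $V:\cf_m\to\cf_J^\perp$ relabelling basis vectors through $\phi$, namely $V\Om:=\Om$ and $Ve_{(k_1,\ldots,k_r)}:=e_{(\phi(k_1),\ldots,\phi(k_r))}$. Because $\phi$ is order preserving, it carries increasing tuples to increasing tuples bijectively, so $V$ sends the canonical orthonormal basis of $\cf_m$ onto that of $\cf_J^\perp$ and is genuinely unitary. The heart of the argument is then the intertwining relation $Vs_h=s_{i_h}V$ for each $h=1,\ldots,n$, which I would verify directly on basis vectors: since $\phi(h)=i_h$ and $\phi$ preserves the order, one checks separately that $Va_h^\dag=a_{i_h}^\dag V$ (the creation prepends $i_h$ exactly when $h$ is smaller than the first label, a condition preserved by $\phi$) and $Va_h=a_{i_h}V$ (the annihilation acts precisely when the first label equals $h$, equivalently when its image equals $i_h$). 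Summing over $h$ yields $V^*S_{I,J}^\perp V=\sum_{h=1}^n s_h=S_n$.

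Finally, since unitarily equivalent self-adjoint operators share their spectrum, the relation $V^*S_{I,J}^\perp V=S_n$ immediately gives $\s(S_{I,J}^\perp)=\s(S_n)$, as claimed. I expect no serious obstacle in this scheme; the only point requiring genuine care is the correct identification of $\cf_J^\perp$, where one must remember that the entries of the admissible tuples may exceed $m$, ranging over all of $K$ rather than merely over $I$. It is exactly this \emph{extra room} provided by the indices larger than $m$ that renders $K$ order-isomorphic to $\bn$, and thereby upgrades $S_{I,J}^\perp$ from a mere restriction to a full unitary copy of $S_n$ on the entire Fock space; this is also what makes the relabelling argument cleaner than an alternative route through the vacuum distribution and Theorem \ref{norm2}.
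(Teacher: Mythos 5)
Your proof is correct, and its key construction is exactly the one the paper uses: your increasing bijection $\phi:\bn\to K$ is precisely the paper's map $\theta_J$ (built there as a composition of partial shifts $\theta_{j_{m-n}}\circ\cdots\circ\theta_{j_1}$), and your relabelling unitary $V$ is the paper's $U_{\theta,J}\in\mathcal{B}(\cf_m,\cf_J^\perp)$, with the same intertwining relation (the paper records it as $S_n^k=U_{\theta,J}^*(S_{I,J}^\perp)^kU_{\theta,J}$). Where the two arguments genuinely part ways is in how this unitary is exploited. You conclude in one line: unitarily equivalent self-adjoint operators are isospectral, so $\s(S_{I,J}^\perp)=\s(S_n)$. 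The paper instead never invokes unitary invariance of the spectrum directly; it uses $U_{\theta,J}$ to show $U_{\theta,J}\cs_n'U_{\theta,J}^*\subseteq(\cs_{I,J}^\perp)'$, hence that $\Om$ is cyclic for the commutant $(\cs_{I,J}^\perp)'$ (transporting the cyclicity established in Theorem \ref{norm}), and then reruns the basic-measure and Gelfand-spectrum machinery of Theorems \ref{norm} and \ref{norm2}. Your route is shorter and entirely sufficient for the statement as well as for its downstream use (the computation of $\|S_I\|$, which only needs $\|S_{I,J}^\perp\|=\|S_n\|$). The paper's longer route buys a genuinely stronger conclusion, in line with its main theme: it shows that the vacuum spectral measure is basic for $\cs_{I,J}^\perp$, so that $\s(S_{I,J}^\perp)$ coincides with the support of the vacuum distribution of $S_{I,J}^\perp$ itself, not merely with $\s(S_n)$ — i.e., the spectrum/support identification of Theorem \ref{norm2} persists for these restricted sums, which is information pure unitary equivalence of the two operators gives only after one also notes that the vacuum laws agree (as the paper does at the outset of its proof).
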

\begin{proof}
 Since the vacuum distributions of $S_{I,J}^\perp$ and $S_n$ are equal, as in Theorem \ref{norm} it is enough to prove that $\Om$ is cyclic for the commutant of the unital $C^*$-algebra $\cs_{I,J}^\perp$ generated by $S_{I,J}^\perp$. In fact, in this case the thesis will follow arguing as in the proof of Theorem \ref{norm2}. To this purpose, we first recall that the right hand side partial shift based on $h$ is the one-to-one map $\theta_h:\mathbb{N}\rightarrow \mathbb{N}$ such that
$$
\theta_h(k):=\left\{\begin{array}{ll}
                      k & \text{if}\,\, k<h \\
                      k+1 & \text{if}\,\, k\geq h.
                    \end{array}
                    \right.
$$
If $j_1<j_2<\ldots <j_{m-n}$ are the elements of $J$, we denote by $\theta_{J}$ the composition $\theta_{j_{m-n}}\circ\cdots \circ \theta_{j_{1}}$. Consider $U_{\theta, J}\in\mathcal{B}(\cf_m, \cf_J^\perp)$
 such that
\begin{align*}
& U_{\theta, J}\Om:=\Om \\
&U_{\theta, J}e_B:=e_{\theta_{J}(B)}\,,
\end{align*}
$e_B$ being a generic element of the canonical basis of $\cf_m$. As
\begin{align*}
&U_{\theta, J}^*\Om=\Om \\
&U_{\theta, J}^*e_C= e_{\theta_{J}^{-1}(C)},
\end{align*}
where $e_C$ is an arbitrary element of the canonical basis of $\cf_J^\perp$, one achieves $U_{\theta, J}$ is unitary, i.e.
\begin{align}
\begin{split}
\label{iso}
&U_{\theta, J}^*U_{\theta, J}=I \\
&U_{\theta, J}U_{\theta, J}^*=I_{\cf_J^\perp}.
\end{split}
\end{align}
Denote $\cs_{I,J}^\perp$ the $C^*$-algebra generated by $S_{I,J}^\perp$ and $I_{\cf_J^\perp}$. After recalling that $S_{I,J}^\perp$ leaves invariant $\cf_J^\perp$, one finds for any $k$
\begin{equation*}
S_n^k=U_{\theta, J}^*(S_{I,J}^\perp)^k U_{\theta, J}.
\end{equation*}
This gives
\begin{equation}
\label{commu4}
[(S_{I,J}^\perp)^k, U_{\theta, J}TU_{\theta, J}^*]=0
\end{equation}
for any $T\in\cs_n'$.
Then it follows $\Om$ is cyclic for $(\cs_{I,J}^\perp)'$, since from \eqref{iso} and \eqref{commu4}
$$
U_{\theta, J}\cs_n'U_{\theta, J}^*\Om=U_{\theta, J}\cf_m^o=(\cf_J^\perp)^o.
$$
\end{proof}
The next result allows us to compute the norm of any sum of monotone gaussian operators.
\begin{proposition}
Under the notation introduced above, one has
$$
\|S_I\|=r^{(n)}_{2^n}
$$
\end{proposition}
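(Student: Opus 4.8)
The plan is to exploit the orthogonal decomposition \eqref{direct}, $\cf_m = \cf_J \oplus \cf_J^\perp$, under which $S_I$ splits as the block-diagonal operator $S_{I,J} \oplus S_{I,J}^\perp$. Since both summands are bounded and self-adjoint, one has
$$
\|S_I\| = \max\big(\|S_{I,J}\|,\,\|S_{I,J}^\perp\|\big).
$$
The contribution of $\cf_J^\perp$ is already settled: Proposition \ref{norm3} gives $\s(S_{I,J}^\perp) = \s(S_n)$, whence, since norm equals spectral radius for self-adjoint operators, $\|S_{I,J}^\perp\| = \|S_n\| = r^{(n)}_{2^n}$ by Theorem \ref{norm2}. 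It therefore remains to control $\|S_{I,J}\|$, and the whole point is to show it does not exceed $r^{(n)}_{2^n}$, so that the maximum above is attained on $\cf_J^\perp$.

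First I would analyse the action of $S_I$ on $\cf_J$ by slicing it into $S_I$-invariant subspaces. Given a basis vector $e_{l_1} \otimes \cdots \otimes e_{l_p}$ of $\cf_J$, let $j_0$ be the smallest index belonging to $J$ that occurs in it; since the $l$'s increase and $j_0$ is minimal in $J$, every index preceding $j_0$ lies in $I$ and is strictly smaller than $j_0$. Freezing the suffix $w := e_{j_0} \otimes \cdots \otimes e_{l_p}$ and letting the prefix range over all increasing strings from $I_{<j_0} := \{i \in I \mid i < j_0\}$ produces a subspace $V_w$, and these give an orthogonal decomposition $\cf_J = \bigoplus_w V_w$. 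The key computation is that on $V_w$ every $s_i$ with $i \in I$, $i > j_0$ annihilates the vectors (both $a_i$ and $a^\dag_i$ vanish, as the front index is $\le j_0 < i$), while each $s_i$ with $i < j_0$ acts only on the prefix and leaves $w$ untouched. Hence $S_I$ restricts on $V_w$ to $\sum_{i \in I_{<j_0}} s_i$, and the obvious unitary sending $(\text{prefix}) \otimes w \mapsto (\text{prefix})$ identifies $(S_I)|_{V_w}$ with $S_{I_{<j_0}}$ acting on the monotone Fock space over $I_{<j_0}$.

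The decisive observation is that $m = i_n \in I$ while $j_0 \in J$ forces $j_0 < m$, so $m \notin I_{<j_0}$ and thus $|I_{<j_0}| \le n - 1$. I would then run an induction on $n = |I|$: the base case $n = 1$ is $\|s_{i_1}\| = 1 = r^{(1)}_{2^1}$, and for the inductive step the identification above together with the inductive hypothesis yields $\|(S_I)|_{V_w}\| = r^{(k)}_{2^k}$ with $k = |I_{<j_0}| \le n-1$ (the degenerate case $k=0$ giving the zero operator). Since the sequence $\big(r^{(k)}_{2^k}\big)_k$ is strictly increasing --- as $r^{(k+1)}_{2^{k+1}} = \tfrac12\big(r^{(k)}_{2^k} + \sqrt{(r^{(k)}_{2^k})^2 + 4}\big) > r^{(k)}_{2^k}$ --- this gives $\|S_{I,J}\| = \sup_w \|(S_I)|_{V_w}\| \le r^{(n-1)}_{2^{n-1}} < r^{(n)}_{2^n}$, and combining with the two displays above finishes the proof.

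I expect the main obstacle to be the invariant-subspace step: verifying rigorously that $\cf_J = \bigoplus_w V_w$ is an $S_I$-reducing decomposition and that on each block $S_I$ genuinely collapses to a sum of at most $n-1$ monotone position operators, carried onto a lower-dimensional monotone Fock space by the partial shift. A crude estimate such as $\|S_{I,J}\| \le \sum_{i \in I_{<j_0}}\|s_i\| \le n-1$ would \emph{not} suffice, since $r^{(n)}_{2^n} < \sqrt{2n}$ is far smaller; it is precisely the reduction to strictly fewer operators, feeding the sharp inductive value $r^{(k)}_{2^k}$, that closes the gap and pins the norm to the $\cf_J^\perp$ part.
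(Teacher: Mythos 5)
Your proposal is correct, but the way you control the $\cf_J$ block is genuinely different from the paper's. Both arguments open identically: by \eqref{direct} one has $S_I=S_{I,J}\oplus S_{I,J}^\perp$, hence $\|S_I\|=\max\{\|S_{I,J}\|,\|S_{I,J}^\perp\|\}$, and Proposition \ref{norm3} together with Theorem \ref{norm2} pins down $\|S_{I,J}^\perp\|=r^{(n)}_{2^n}$. For the remaining block, however, the paper does not decompose $\cf_J$ at all: it compares the two restrictions directly through the shift unitary $U_{\theta,J}$, writing out $S_{I,J}\xi$ and $S_{I,J}^\perp U_{\theta,J}\xi$ for a generic $\xi\in\cf_J^o$ and observing that the latter contains (images of) all the terms of the former, plus possibly more, whence $\|S_{I,J}\xi\|\le\|S_{I,J}^\perp U_{\theta,J}\xi\|\le\|S_{I,J}^\perp\|\,\|\xi\|$; this yields $\|S_{I,J}\|\le\|S_{I,J}^\perp\|$ with no induction. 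Your route instead slices $\cf_J$ into the $S_I$-invariant blocks $V_w$ indexed by the suffix starting at the first $J$-index, identifies each restriction with the sum of position operators over $I_{<j_0}=\{i\in I\mid i<j_0\}$ acting on the monotone Fock space over $I_{<j_0}$ (a set of at most $n-1$ indices, since $m=i_n\in I$ forces $j_0<m$), and closes by induction on $|I|$ together with the monotonicity of $k\mapsto r^{(k)}_{2^k}$, which indeed follows from \eqref{ato1}. I checked the invariance of the $V_w$ and the intertwining with the smaller Fock space; they hold exactly as you state. Your approach buys strictly more than the paper's: you obtain the quantitative gap $\|S_{I,J}\|\le r^{(n-1)}_{2^{n-1}}<r^{(n)}_{2^n}$, so the norm (indeed the top of the spectrum) of $S_I$ is attained entirely on $\cf_J^\perp$, and moreover $\s(S_{I,J})$ is seen to lie in a union of spectra of lower-order sums; the paper's comparison is shorter and avoids induction, at the price of a term-by-term domination that is somewhat delicate to verify. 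Two cosmetic repairs to your write-up: (i) the inductive hypothesis gives $\|(S_I)|_{V_w}\|\le r^{(k)}_{2^k}$ rather than equality, since a priori the restriction of $S_{I_{<j_0}}$ to the small Fock space could have smaller norm than on all of $\cf_m$ --- but the inequality is all you use; (ii) when $I_{<j_0}$ is an initial segment $\{1,\dots,k\}$ it has no gap, so there you must invoke Theorem \ref{norm2} rather than the inductive hypothesis, which as stated covers only gapped index sets.
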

\begin{proof}
Indeed, as $S_I$ leaves both the subspaces $\cf_J$ and $\cf_J^\perp$ invariant, from \eqref{direct} one finds
$$
S_I=S_{I,J} \oplus S_{I,J}^\perp.
$$
Fix a generic element $\xi$ in  $\cf_J^o$, i.e.
$$
\xi:=\sum_{h=1}^p\a_h e_{k_1^{(h)}}\otimes \cdots \otimes e_{k_r^{(h)}}\,,
$$
where $p\in \mathbb{N}$, $k_1^{(h)}<\cdots < k_r^{(h)}$ and $k_l^{(h)}\in J$ for some $l$. After recalling that $S_{I,J}$ acts only on $e_{k_1^{(h)}}$, one finds
\begin{align*}
S_{I,J}\xi=& \sum_{1\leq h \leq p,\,\, k_1^{(h)}=i_1}\a_h e_{k_2^{(h)}}\otimes \cdots \otimes e_{k_r^{(h)}} \\
&+\sum_{1\leq h \leq p,\,\, i_1<k_1^{(h)}<i_2}\a_h e_{i_1}\otimes e_{k_1^{(h)}}\otimes \cdots \otimes e_{k_r^{(h)}} \\
&+\sum_{1\leq h \leq p,\,\, k_1^{(h)}=i_2}\a_h(e_{i_1}\otimes e_{k_1^{(h)}}\otimes \cdots \otimes e_{k_r^{(h)}}+ e_{k_2^{(h)}}\otimes \cdots \otimes e_{k_r^{(h)}}) \\
&+ \sum_{1\leq h \leq p,\,\, i_2<k_1^{(h)}<i_3}\a_h(e_{i_1}+e_{i_2})\otimes e_{k_1^{(h)}}\otimes \cdots \otimes e_{k_r^{(h)}}\\
&+\,\,\,\,\,\,\,\,\,\,\,\,\,\,\,\,\,\,\,\, \cdots  \,\,\,\,\,\,\,\,\,\,\,\,\,\,\,\,\,\,\,\,  \cdots  \,\,\,\,\,\,\,\,\,\,\,\,\,\,\,\,\,\,\,\, \cdots  \\
&+\sum_{1\leq h \leq p,\,\, i_n<k_1^{(h)}}\a_h \bigg(\sum_{j=1}^n e_{i_j}\bigg)\otimes e_{k_1^{(h)}}\otimes \cdots \otimes e_{k_r^{(h)}}.
\end{align*}
Let us take $\eta:=U_{\theta,J}\xi\in(\cf_J^{\perp})^o$.
It results
\begin{align*}
S_{I,J}^\perp\eta=& \sum_{1\leq h \leq p,\,\, \theta_J(k_1)^{(h)}=i_1}\a_h U_{\theta,J}\big(e_{k_2^{(h)}}\otimes \cdots \otimes e_{k_r^{(h)}}\big) \\
&+\sum_{1\leq h \leq p,\,\, i_1<k_1^{(h)}=\theta_J(k_1^{(h)})<i_2}\a_h e_{i_1}\otimes U_{\theta,J}\big(e_{k_1^{(h)}}\otimes \cdots \otimes e_{k_r^{(h)}}\big) \\
&+\sum_{1\leq h \leq p,\,\, i_1<k_1^{(h)}<\theta_J(k_1^{(h)})=i_2}\a_h \bigg[e_{i_1}\otimes U_{\theta,J}\big(e_{k_1^{(h)}}\otimes \cdots \otimes e_{k_r^{(h)}}\big)\\
&\,\,\,\,\,+U_{\theta,J}\big(e_{k_2^{(h)}}\otimes \cdots \otimes e_{k_r^{(h)}}\big)\bigg] \\
\end{align*}
\begin{align*}
&+\sum_{1\leq h \leq p,\,\, i_1<k_1^{(h)}<i_2<\theta_J(k_1^{(h)})=i_3}\a_h \bigg[(e_{i_1}+e_{i_2})\otimes U_{\theta,J}\big(e_{k_1^{(h)}}\otimes \cdots \otimes e_{k_r^{(h)}}\big)\\
&\,\,\,\,\,+U_{\theta,J}\big(e_{k_2^{(h)}}\otimes \cdots \otimes e_{k_r^{(h)}}\big)\bigg] \\
&+\,\,\,\,\,\,\,\,\,\,\,\,\,\,\,\,\,\,\,\, \cdots  \,\,\,\,\,\,\,\,\,\,\,\,\,\,\,\,\,\,\,\,  \cdots  \,\,\,\,\,\,\,\,\,\,\,\,\,\,\,\,\,\,\,\, \cdots  \\
&+  \sum_{1\leq h \leq p,\,\, i_n<k_1^{(h)}}\a_h \bigg(\sum_{j=1}^n e_{i_j}\bigg)\otimes e_{k_1^{(h)}}\otimes \cdots \otimes e_{k_r^{(h)}}.
\end{align*}
As $U_{\theta,J}$ is unitary, it turns out
$$
\|S_{I,J}\xi\|\leq \|S_{I,J}^\perp \eta\|\leq \|S_{I,J}^\perp\|\|\xi\|,
$$
and the density of $\cf_J^o$ in $\cf_J$ gives $\|S_{I,J}\|\leq \|S_{I,J}^\perp\|$. Since $S_{I,J}^\perp$ is self-adjoint and $\|S_I\|=\sup\{\|S_{I,J}\|,\|S_{I,J}^\perp\|\}$, the thesis follows from Proposition \ref{norm3} and Theorem \ref{norm2}.
\end{proof}

\section{appendix}
\label{app}
In the next lines we briefly present how a different approach with respect to the monotone convolution of Section \ref{sec2} gives us some information about the interlacing structure connecting the atoms of $\m_m$ and those of $\m_1, \ldots, \m_{m-1}$. Here we point out that a similar achievement can be performed just using monotone independence, as shown in \cite{Has}, Theorem 3.1. We decided to put here the following results since they refine the latter case, where interlacing relations are established between $\m_m$ and $\m_{m-1}$, $m\geq 2$.

Let us take
$$
u_{m,n}:=\om_\Om\big((S_m^2)^{n}\big).
$$
As $S_m^2$ are bounded self-adjont operators, the sequence $u_{m,n}$ uniquely determines
the symmetric probability measure, say $\nu_m$, such that
$$
u_{m,n}=\int_\mathbb{R}x^{n}\di \nu_m(x).
$$
Using some results contained in \cite{CFL,CFL2}, one sees the elements of the moment generating functions sequence $(T_m)_m$ are mutually related in the following way
\begin{equation*}
T_{m}\left( t\right) =\frac{1}{1-t-t\sum_{k=2}^{m}T_{k-1}\left(t\right)}.
\end{equation*}
This gives, for each $m$
\begin{equation}
\label{md-04e}
T_{m+1}(t)=\frac{T_{m}(t)}{1-tT^2_{m}(t)}.
\end{equation}
If $M_m(t)$ denotes the moments generating function of $S_m$, from \eqref{md-04e} one achieves
\begin{equation}
\label{md-04f}
M_{m+1}(t)=\frac{M_{m}(t)}{1-t^2M^2_{m}(t)}.
\end{equation}
If
$$
M_1(t)=\frac{Q_1(t)}{P_1(t)}\,,
$$
where $P_{1}(t)=1-t^2$ and $Q_1(t)=1$, and more in general for arbitrary $m$
\begin{equation}
\label{tristar}
M_m(t)=\frac{Q_m(t)}{P_m(t)},
\end{equation}
\eqref{md-04f} yields the following recursive formulas for $m\geq 2$
\begin{equation}
\label{recrel}
Q_{m+1}(t)=\prod_{k=1}^mP_k(t),\,\,\,\, P_{m+1}(t)=P_m^2(t)-t^2Q_m^2(t).
\end{equation}
Thus, properties of the atoms have been reduced to those for roots of the above polynomials. We denote $\mathbb{R}[t]$ the ring of polynomials in the indeterminate $t$ with coefficients in the field $\mathbb{R}$, and $\deg(P)$ is the degree of each $P\in \mathbb{R}[t]$. Furthermore, $\mathcal{Z}_\mathbb{R}(P)$ is the (possibly empty) set of the roots of $P$ in $\mathbb{R}$.
The following lemma is an application of Bolzano's Theorem.
\begin{lemma}
\label{propol1}
Let $P,Q\in \mathbb{R}[t]$ such that $\mathcal{Z}_\mathbb{R}(P)=\{p_1,\ldots, p_m\}$ and $\mathcal{Z}_\mathbb{R}(Q)=\{q_1,\ldots, q_s\}$, for $m,n\geq 1$. Suppose $\mathcal{Z}_\mathbb{R}(P)$ and $\mathcal{Z}_\mathbb{R}(Q)$ are disjoint, and $Q(p_j)>0$ for any $j=1,\ldots, m$. Let $\{r_1,\ldots, r_{m+s}\}$
be the set of the real roots of $P$ and $Q$ taken in an increasing order. If there exists $j$ such that $r_j$ and $r_{j+1}$ are zeros of different polynomials (\emph{i.e} if $r_j\in \mathcal{Z}_\mathbb{R}(P)$, then  $r_{j+1}\in \mathcal{Z}_\mathbb{R}(Q)$, and viceversa), one has $P^2(t)-Q(t)$ possesses at least a real root on $(r_j,r_{j+1})$.
\end{lemma}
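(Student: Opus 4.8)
The plan is to apply Bolzano's Theorem directly to the continuous function
$$
g(t):=P^2(t)-Q(t),
$$
by evaluating it at the two endpoints $r_j$ and $r_{j+1}$ and checking that it changes sign across $(r_j,r_{j+1})$. The whole argument rests on the two standing hypotheses: the disjointness of $\mathcal{Z}_\mathbb{R}(P)$ and $\mathcal{Z}_\mathbb{R}(Q)$, and the positivity $Q(p_j)>0$ at every real root $p_j$ of $P$.

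First I would record the elementary consequence of disjointness that at a real root of $P$ the polynomial $Q$ does not vanish, and symmetrically $P$ does not vanish at a real root of $Q$. This lets me compute $g$ at each endpoint. At a point $r$ with $P(r)=0$ one has $g(r)=-Q(r)$, and since $r$ is then one of the $p_j$, the hypothesis $Q(p_j)>0$ forces $g(r)<0$. At a point $r$ with $Q(r)=0$ one has $g(r)=P^2(r)$, which is strictly positive because $P(r)\neq 0$ by disjointness; hence $g(r)>0$.

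With these two sign computations in hand, I would split into the two admissible configurations permitted by the hypothesis. If $r_j\in\mathcal{Z}_\mathbb{R}(P)$ and $r_{j+1}\in\mathcal{Z}_\mathbb{R}(Q)$, then $g(r_j)<0<g(r_{j+1})$; if instead $r_j\in\mathcal{Z}_\mathbb{R}(Q)$ and $r_{j+1}\in\mathcal{Z}_\mathbb{R}(P)$, then $g(r_j)>0>g(r_{j+1})$. In either case $g$ assumes values of opposite sign at the endpoints of $[r_j,r_{j+1}]$, so Bolzano's Theorem delivers at least one zero of $g=P^2-Q$ in the open interval $(r_j,r_{j+1})$, which is exactly the assertion.

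Since the argument is a direct sign chase followed by the Intermediate Value Theorem, I do not expect a serious obstacle. The only point requiring genuine care is to confirm that the two hypotheses are precisely what pins down the signs: disjointness is what guarantees $P^2>0$ (rather than merely $\geq 0$) at the roots of $Q$, while $Q(p_j)>0$ is what guarantees $-Q<0$ at the roots of $P$. One must also be careful to treat both orderings of the consecutive roots $r_j,r_{j+1}$ symmetrically, so that the conclusion is insensitive to which polynomial contributes the left endpoint.
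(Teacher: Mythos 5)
Your proof is correct and is exactly the argument the paper intends: the paper gives no written proof beyond remarking that the lemma ``is an application of Bolzano's Theorem,'' and your sign chase ($P^2-Q$ negative at roots of $P$ by the hypothesis $Q(p_j)>0$, positive at roots of $Q$ by disjointness) is precisely the computation the authors later reuse inside the proof of Proposition \ref{lem2}. Nothing is missing.
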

Let $(P_n)_n$  and $(Q_n)_n$ be sequences in $\mathbb{R}[t]$ such that
\begin{align}
\begin{split}
\label{p2}
& \deg(P_0)>0,\,\,\,\,\,\, P_0(0)\neq 0,\,\,\,\,\,\, Q_0(t):=1 \\
&Q_{n+1}(t):=\prod_{k=0}^nP_k(t),\,\,\,\,\, P_{n+1}(t):=P_n^2(t)-t^2Q_n^2(t).
\end{split}
\end{align}
\begin{lemma}
\label{propol2}
Let $(P_n)_n$  and $(Q_n)_n$ be sequences satisfying \eqref{p2}. Then, for any $n\geq 0$
\begin{equation}
\label{inters}
\mathcal{Z}_{\mathbb{R}}(P_{n}) \bigcap \mathcal{Z}_{\mathbb{R}}(Q_{n})=\emptyset
\end{equation}
and
$$
\mathcal{Z}_{\mathbb{R}}(Q_{n+1})=\bigcup_{k=0}^n \mathcal{Z}_{\mathbb{R}}(P_{k}), \qquad \mathcal{Z}_{\mathbb{R}}(P_{n})\bigcap \mathcal{Z}_{\mathbb{R}}(P_{m})=\emptyset,\,\,\, \text{if}\,\,\, m\neq n.
$$
\end{lemma}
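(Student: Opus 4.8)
The plan is to treat the three assertions in increasing order of difficulty, observing at the outset that none of them actually requires the Bolzano-type input of Lemma \ref{propol1}; the whole statement is purely algebraic. The identity $\mathcal{Z}_{\mathbb{R}}(Q_{n+1})=\bigcup_{k=0}^n \mathcal{Z}_{\mathbb{R}}(P_{k})$ is immediate from the definition in \eqref{p2}, since $Q_{n+1}=\prod_{k=0}^n P_k$ and a real number annihilates a product of polynomials precisely when it annihilates at least one of the factors. The real content is concentrated in the disjointness relation \eqref{inters}, from which the pairwise disjointness of the $\mathcal{Z}_{\mathbb{R}}(P_n)$ will drop out at the end.

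Before the main induction I would record two elementary facts. First, the factorisation $Q_{n+1}=P_n\,Q_n$ (valid for $n\ge 0$, with $Q_0\equiv 1$), which lets me rewrite $\mathcal{Z}_{\mathbb{R}}(Q_{n+1})=\mathcal{Z}_{\mathbb{R}}(P_n)\cup\mathcal{Z}_{\mathbb{R}}(Q_n)$. Second, the non-vanishing at the origin: evaluating $P_{n+1}(t)=P_n^2(t)-t^2Q_n^2(t)$ at $t=0$ yields $P_{n+1}(0)=P_n(0)^2$, so that the hypothesis $P_0(0)\ne 0$ in \eqref{p2} propagates to $P_n(0)\ne 0$ for every $n$. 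This second point is exactly where that hypothesis is consumed, and it is the only genuinely delicate spot in the argument.

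I would then prove \eqref{inters} by induction on $n$. The base case $n=0$ is trivial since $Q_0\equiv 1$ has no roots. For the inductive step, assume $\mathcal{Z}_{\mathbb{R}}(P_n)\cap\mathcal{Z}_{\mathbb{R}}(Q_n)=\emptyset$ and let $t_0$ be a hypothetical common root of $P_{n+1}$ and $Q_{n+1}=P_nQ_n$; then $t_0$ lies in $\mathcal{Z}_{\mathbb{R}}(P_n)$ or in $\mathcal{Z}_{\mathbb{R}}(Q_n)$, and I treat the two cases separately via the recurrence $P_{n+1}=P_n^2-t^2Q_n^2$. If $P_n(t_0)=0$, then $P_{n+1}(t_0)=0$ forces $t_0^2Q_n(t_0)^2=0$, hence either $t_0=0$, impossible by the non-vanishing just established, or $Q_n(t_0)=0$, contradicting the inductive hypothesis. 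If instead $Q_n(t_0)=0$, then $P_{n+1}(t_0)=0$ forces $P_n(t_0)^2=0$, so $P_n(t_0)=0$, again contradicting the inductive hypothesis. Either way no such $t_0$ exists, completing the induction.

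Finally, the pairwise disjointness $\mathcal{Z}_{\mathbb{R}}(P_n)\cap\mathcal{Z}_{\mathbb{R}}(P_m)=\emptyset$ for $m\ne n$ follows at once: taking $m<n$, the polynomial $P_m$ divides $Q_n=\prod_{k=0}^{n-1}P_k$, so $\mathcal{Z}_{\mathbb{R}}(P_m)\subseteq\mathcal{Z}_{\mathbb{R}}(Q_n)$, whence \eqref{inters} gives $\mathcal{Z}_{\mathbb{R}}(P_n)\cap\mathcal{Z}_{\mathbb{R}}(P_m)\subseteq\mathcal{Z}_{\mathbb{R}}(P_n)\cap\mathcal{Z}_{\mathbb{R}}(Q_n)=\emptyset$. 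I expect the only point requiring care to be the degenerate value $t_0=0$ in the inductive step, which is precisely why the auxiliary observation $P_n(0)\ne 0$ must be isolated beforehand; everything else is bookkeeping on the recurrence \eqref{p2}.
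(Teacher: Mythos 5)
Your proof is correct and takes essentially the same route as the paper's: the paper likewise reduces everything to the disjointness \eqref{inters}, rules out a null common root via $P_0(0)\neq 0$ propagating through $P_{n+1}(0)=P_n(0)^2$, and then derives a contradiction from the recurrence $P_{n+1}=P_n^2-t^2Q_n^2$ together with the factorisation $Q_{n+1}=P_nQ_n$, only phrased as a minimal-counterexample argument (over $\mathbb{C}$) instead of your explicit induction. The union identity and the deduction of $\mathcal{Z}_{\mathbb{R}}(P_n)\cap\mathcal{Z}_{\mathbb{R}}(P_m)=\emptyset$ from $\mathcal{Z}_{\mathbb{R}}(P_m)\subseteq\mathcal{Z}_{\mathbb{R}}(Q_n)$ are obtained exactly as you do.
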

\begin{proof}
We preliminary notice that if there exists a common root $t_0$ for $P_n$ and $Q_n$ for some $n$, then $t_0$ is not null, as $P_0(0)\neq0$ and \eqref{p2}. Denote now
$$
J:=\{n\in\mathbb{N} \mid P_{n+1}(t_0)=Q_{n+1}(t_0),\,\,\, \text{for some}\,\,\, t_0\in\mathbb{C}\}.
$$
If we prove $J$ is empty, \eqref{inters} follows. In fact, suppose $J\neq \emptyset$ and take $m$ as its minimum. Since $Q_{m+1}(t_0)=0$ for some $t_0$, \eqref{p2} gives that either $P_m(t_0)=0$ or $Q_m(t_0)=0$. The assumption $P_{m+1}(t_0)=0$, together with \eqref{p2} yields that both $P_m$ and $Q_m$ vanish in $t_0$, since $t_0\neq 0$. This contradicts the minimum assumption on $m$.

\noindent Finally, one easily obtains $\mathcal{Z}_{\mathbb{R}}(Q_{n+1})=\bigcup_{k=0}^n \mathcal{Z}_{\mathbb{R}}(P_{k})$. If in addition $n\neq m$, say $n>m$, the last part of the statement follows from \eqref{inters}, as $\mathcal{Z}_{\mathbb{R}}(P_{m})\subseteq \mathcal{Z}_{\mathbb{R}}(Q_{n})$.
\end{proof}
As a consequence, if $P_n(t)$ and $Q_n(t)$ are as in \eqref{recrel}, with $P_{1}(t)=1-t^2$, $Q_1(t)=1$, one has
\begin{align}
\begin{split}
\label{disj}
&\mathcal{Z}_{\mathbb{R}}(P_{n}) \bigcap \mathcal{Z}_{\mathbb{R}}(Q_{n})=\emptyset, \\
&\mathcal{Z}_\mathbb{R}(Q_{n+1})=\bigcup_{k=1}^n \mathcal{Z}_\mathbb{R}(P_{k}), \\
&\mathcal{Z}_\mathbb{R}(P_n)\bigcap\mathcal{Z}_\mathbb{R}(P_m)=\emptyset,\,\,\, \text{if}\,\,\, m\neq n. \\
\end{split}
\end{align}
Finally, we show the interlacing structure connecting the atoms $\m_n$  and those of all $\m_i$, $i\leq n-1$. It is achieved by means of the roots of $P_n(t)$ and $Q_n(t)$. As these ones are both even functions, we reduce the matter to the positive half-plane.
\begin{proposition}
\label{lem2}
Let $P_n(t)$ and $Q_n(t)$ be as in \eqref{recrel}, with $P_{1}(t)=1-t^2$, $Q_1(t)=1$. Then, for any $n\geq 1$ the roots of $P_{n}$ and $Q_{n}$ are real and simple. Moreover, if $p_1^{(n+1)}<\cdots <p_{2^n}^{(n+1)}$ and $q_1^{(n+1)}<\cdots <q_{2^n-1}^{(n+1)}$ are the positive zeros of $P_{n+1}$ and $Q_{n+1}$ respectively, one has
\begin{equation*}
p_1^{(n+1)}<q_1^{(n+1)}<p_2^{(n+1)}<q_2^{(n+1)}<\cdots<p_{2^n-1}^{(n+1)}<q_{2^n-1}^{(n+1)}<p_{2^n}^{(n+1)}.
\end{equation*}
\end{proposition}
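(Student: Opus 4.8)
The plan is to argue by induction on $n$, exploiting the two factorisations hidden in \eqref{recrel}, namely $Q_{n+1}=P_nQ_n$ and $P_{n+1}=P_n^2-t^2Q_n^2$, together with the disjointness relations \eqref{disj}. First I would record a few elementary facts. Since $P_1(t)=1-t^2$ and $Q_1(t)=1$ are even, \eqref{recrel} propagates evenness to every $P_n$ and $Q_n$, so it suffices to locate the \emph{positive} zeros. Evaluating the recursion at $0$ gives $P_{n+1}(0)=P_n(0)^2$, whence $P_n(0)=Q_n(0)=1$ for all $n$; in particular $0$ is never a root, which is what keeps the sign analysis below strict. A quick degree count along \eqref{recrel} yields $\deg P_n=2^n$ and $\deg Q_n=2^n-2$, so the assertion that $P_{n+1}$ has $2^n$ positive zeros and $Q_{n+1}$ has $2^n-1$ of them is exactly equivalent to all roots being real and simple. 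The inductive hypothesis I would carry is that at level $n$ the positive zeros satisfy $p_1^{(n)}<q_1^{(n)}<\cdots<p_{2^{n-1}}^{(n)}$ with all roots real and simple; the base case $n=1$ is immediate, as $P_1$ has the single positive root $1$ and $Q_1$ has none.

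For the inductive step, the zeros of $Q_{n+1}=P_nQ_n$ are, by \eqref{disj}, the disjoint union of those of $P_n$ and $Q_n$; by the inductive interlacing these already sit in the alternating order $p_1^{(n)}<q_1^{(n)}<p_2^{(n)}<\cdots<p_{2^{n-1}}^{(n)}$, which are precisely the $2^n-1$ positive numbers $q_1^{(n+1)}<\cdots<q_{2^n-1}^{(n+1)}$, all simple by disjointness and inductive simplicity. It then remains to place the zeros of $P_{n+1}$. Here I would feed the factorisation $P_{n+1}=P_n^2-t^2Q_n^2$ into Lemma \ref{propol1} with $P=P_n$ and $Q=t^2Q_n^2$: at a positive zero $p$ of $P_n$ one has $P_{n+1}(p)=-p^2Q_n(p)^2<0$, while at a positive zero $q$ of $Q_n$ one has $P_{n+1}(q)=P_n(q)^2>0$, both strict since $p,q\neq 0$ and the root sets are disjoint. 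Thus $P_{n+1}$ has opposite signs at consecutive zeros of $Q_{n+1}$, and Bolzano produces one zero of $P_{n+1}$ in each of the $2^n-2$ gaps between them.

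The two extreme zeros I would capture by boundary arguments. On the left, $P_{n+1}(0)=1>0$ whereas $P_{n+1}(p_1^{(n)})<0$, so Bolzano gives a zero in $\big(0,q_1^{(n+1)}\big)$. On the right, the leading term of $P_{n+1}$ comes from $P_n^2$ (as $\deg P_n^2=2^{n+1}>2^{n+1}-2=\deg t^2Q_n^2$), so $P_{n+1}(t)\to+\infty$; since $P_{n+1}$ is negative at the largest zero $p_{2^{n-1}}^{(n)}=q_{2^n-1}^{(n+1)}$ of $Q_{n+1}$, a further zero appears in $\big(q_{2^n-1}^{(n+1)},+\infty\big)$. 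Counting, I will have exhibited exactly $2^n$ distinct positive zeros of $P_{n+1}$, interlacing the $q_j^{(n+1)}$ in the required pattern. Since $\deg P_{n+1}=2^{n+1}$ and $P_{n+1}$ is even, these $2^n$ positive zeros together with their reflections exhaust all roots, forcing every root to be real and simple and closing the induction.

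The step I expect to be most delicate is the bookkeeping that guarantees the count is \emph{exactly} $2^n$ rather than merely at least $2^n$: the interior sign changes supply $2^n-2$ zeros and the two boundary arguments supply two more, and one must be certain the boundary zeros are genuinely new and that no zero is produced twice. The degree identity $\deg P_{n+1}=2^{n+1}$ is what closes this gap, because once $2^n$ distinct positive zeros are located their negatives already saturate the degree, leaving no room either for higher multiplicity or for additional real roots; this is also what upgrades the mere existence statements from Lemma \ref{propol1} to the sharp strict interlacing claimed in the proposition.
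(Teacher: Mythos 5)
Your proof is correct and follows essentially the same route as the paper's: induction on $n$, the factorisation $Q_{n+1}=P_nQ_n$, the sign evaluation of $P_{n+1}=P_n^2-t^2Q_n^2$ at the zeros of $P_n$ and $Q_n$ combined with Lemma \ref{propol1}, and a final degree count to upgrade existence to exact strict interlacing. The only (cosmetic) difference is at the two extreme roots: where the paper proves the structural identity \eqref{recrel21} by a sub-induction to control $P_{n+1}$ at $0$ and at $+\infty$, you get the same two facts directly from $P_{n+1}(0)=P_n(0)^2=1$ and from $\deg P_n^2>\deg\big(t^2Q_n^2\big)$.
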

\begin{proof}
Indeed, when $n=1$ one finds that the positive zeros of $P_2$ and $Q_2$ are $\frac{\sqrt{5}\pm 1}{2}$ and $1$, respectively.

Now we suppose the statement holds for any $m\leq n$, and consider the case $m=n+1$. As \eqref{recrel} gives
$$
Q_{n+1}(t)=P_n(t)Q_n(t),
$$
any positive root of $Q_{n+1}$ is either a root of $P_n$ or a zero of $Q_n$, and they do not share any zero by the induction assumption. As a consequence, $Q_{n+1}$ has exactly $2^n-1$ positive zeros.
Let $p_1^{(n)}<\cdots <p_{2^{n-1}}^{(n)}$ be the totality of the positive zeros of $P_n$. From \eqref{recrel}
\begin{equation*}
P_{n+1}\big(p_h^{(n)}\big)=-p_h^{(n)}Q^2_n\big(p_h^{(n)}\big) <0,
\end{equation*}
for each $h=1,\ldots,2^{n-1}$, since $Q_n$ and $P_n$ have no common roots, and $Q^2_n(p_h^{(n)})>0$ as follows from \eqref{disj}.
Similarly, for any $h=1,\ldots,2^{n-1}-1$
$$
P_{n+1}\big(q_h^{(n)}\big)=P^2_n\big(q_h^{(n)}\big)>0,
$$
where $q_1^{(n)}<q_2^{(n)}<\cdots <q_{2^{n-1}-1}^{(n)}$ are the positive roots of $Q_n$.
The induction assumption and Lemma \ref{propol1} give us $P_{n+1}$ has a root in each of the intervals $(p_h^{(n)},q_h^{(n)})$ and $(q_h^{(n)},p_{h+1}^{(n)})$,  $h=1,\ldots, 2^{n-1}-1$. The thesis is then achieved as soon as one proves
\begin{equation}
\label{recrel21}
P_{n+1}(t)=t^{2^{n+1}}+ \sum_{k=1}^{2^{n+1}-2}a_k^{(n+1)}t^k+1,
\end{equation}
for all $n\geq 1$. \eqref{recrel21} is indeed satisfied for $n=1$, as $P_2(t)= t^4-3t^2+1$. Further, we assume it holds for any $P_m(t)$ with $m\leq n$. As a consequence,
$P_n^2(t)$ is a monic polynomial with degree $2^{n+1}$ with a constant unital term.

\noindent From \eqref{recrel}, one has
$$
t^2Q_n^2(t)=t^2\big(1-t^2\big)^2\prod_{r=2}^{n-1}\bigg(t^{2^r}+\sum_{k=1}^{2^r-2}a_k^{r}t^k+1\bigg)^2,
$$
and since $\displaystyle\sum_{k=1}^{n-1}2^k=2^n-2$, it follows that $\deg(t^2Q_n^2(t))=2^{n+1}-2$. Finally, \eqref{recrel21} is achieved using again \eqref{recrel}.
\end{proof}

\bigskip

\textbf{Acknowledgements.} The authors kindly acknowledge the Italian INDAM-GNAMPA and Fondi di Ateneo Universit\`a di Bari `Probabilit\`a Quantistica e Applicazioni' for their support. V. Crismale also acknowledges  the FFABR project 2018 of the Italian MIUR.

\end{document}